\setlist{leftmargin=*}
\newtheorem{thm}{Theorem}[section]
\newtheorem{cor}[thm]{Corollary}
\newtheorem{lem}[thm]{Lemma}
\newtheorem{prop}[thm]{Proposition}
\newtheorem{claim}[thm]{Claim}
\newtheorem{fact}[thm]{Fact}
\newtheorem{ass}{Assumption}
\theoremstyle{definition}
\newtheorem{defn}[thm]{Definition}
\theoremstyle{remark}
\newtheorem{rem}[thm]{Remark}
 \numberwithin{equation}{section}
    {\medskip\begingroup\leftskip 0.5cm\rightskip 0.5cm\noindent\begin{small}{\bf Remark.}}
    {\end{small}\par\endgroup}
{\begin{list}{$\bullet$}
 {\settowidth{\labelwidth}{\textsf{$\bullet$}} \setlength{\leftmargin}{10pt}}}
{\end{list}}
\newcounter{ssample}[section]
\newcounter{insertcount}
\noindent\begin{small}{\color{blue} \stepcounter{insertcount}
          {
            \bf Insert \arabic{insertcount}. #1.}
            \addcontentsline{toc}{subsection}{{\ \ \small  Insert \arabic{insertcount}: #1}}
               \leavevmode  }
\newcommand{\mrmk}[1]
{{\tiny$^{\spadesuit}$}\marginpar{\fbox{\footnotesize #1}}}
\def\strutdepth{\dp\strutbox}%
\def\marginalnote#1{\strut\vadjust{\kern-\strutdepth\specialnote{#1}}}%
\def\specialnote#1{\vtop to \strutdepth{\baselineskip%
\strutdepth\vss\llap{\hbox{\scriptsize \bf #1}}\null}}%
\newcommand{\RR}{\mathbb{R}}
\newcommand{\CC}{\mathbb{C}}
\newcommand{\NN}{\mathbb N}
\def\tp{\mathrm{tp}}
\def\CM{\mathcal{M}}
\def\UU{\mathbb{U}}
\newcommand*\bbar[1]{%
  \hbox{%
    \vbox{%
      \hrule height 0.5pt 
      \kern0.5ex
      \hbox{%
        \kern-0.1em
        \ensuremath{#1}%
        \kern-0.1em
      }%
    }%
  }%
}
\gdef\CP{\mathcal{P}}
\gdef\eq{\mathrm{eq}}
\def\Ind#1#2{#1\setbox0=\hbox{$#1x$}\kern\wd0\hbox to 0pt{\hss$#1\mid$\hss}
\lower.9\ht0\hbox to 0pt{\hss$#1\smile$\hss}\kern\wd0}
\def\ind{\mathop{\mathpalette\Ind{}}}
\def\notind#1#2{#1\setbox0=\hbox{$#1x$}\kern\wd0
\hbox to 0pt{\mathchardef\nn=12854\hss$#1\nn$\kern1.4\wd0\hss}
\hbox to 0pt{\hss$#1\mid$\hss}\lower.9\ht0 \hbox to 0pt{\hss$#1\smile$\hss}\kern\wd0}
\def\nind{\mathop{\mathpalette\notind{}}}
\title[Model-theoretic Elekes-Szab\'o in the strongly minimal case]{Model-theoretic Elekes-Szab\'o in the strongly minimal case}
\author{Artem Chernikov} 
\address{Department of Mathematics\\
University of California Los Angeles\\
Los Angeles, CA 90095-1555} 
\email{chernikov@math.ucla.edu}
\author{Sergei Starchenko}
\address{Department of Mathematics\\
University of Notre Dame\\
Notre Dame, IN, 46656, USA}
\email{sstarche@nd.edu}
\begin{document}
\maketitle

 \gdef\tY{\tilde{Y}}
\gdef\tZ{\tilde{Z}}
\gdef\RM{\operatorname{RM}}
\gdef\acl{\operatorname{acl}}
\global\long\def\cdim{\operatorname{cdim}}
\gdef\Im{\operatorname{Im}}

\begin{abstract}
We prove a generalizations of the Elekes--Szab\'{o} theorem \cite{ES} for
relations  definable in 
strongly minimal structures that are interpretable in distal
structures.
\end{abstract}

\section{Introduction and preliminaries}
\label{sec:setting}

\bigskip 
Our notation is mostly standard. For $n\in \NN$ we denote by $[n]$ the set $[n]=\{1,\dotsc,n\}$. If $X$ is a set and $n\in \NN$, 
then we write $A \subseteq_n X$ to denote that $A$ is a subset of $X$
with $|A|\leq n$. Given a binary relation $E \subseteq X \times Y$ and $a \in X, b \in Y$, we write $E_a = \{ b' \in Y : (a,b') \in E \}$ and $E_b = \{ a' \in X : (a',b) \in Y\}$ to denote the fibers of $E$ at $a$ and $b$, respectively.
 As usual, for  functions $f,g\colon \NN\to \RR$ we write 
 $f(n)  = O(g(n))$ if there is a positive $C\in \RR$ and $n_0\in
\NN$ such that 
$f(n) \leq C g(n)$ for all $n> n_0$.
As usual, given $s,t \in \mathbb{N}$ we say that a bipartite graph $E \subseteq X \times Y$ is \emph{$K_{s,t}$-free} if it does not contain a copy of the complete bipartite graph $K_{s,t}$ with parts of size $s$ and $t$, respectively.

We will use freely some standard model-theoretic notions such as saturated models, algebraic closure (by which we will always mean the algebraic closure in $\CM^\eq$) and Morley rank (see e.g. \cite{marker2006model, tent2012course}).

\begin{defn}
  \begin{enumerate}
  \item 
We say that a subset $F\subseteq X\times Y$ is \emph{cartesian}  if
$I\times J \subseteq F$ for some infinite $I\subseteq X, J\subseteq
Y$. 
\item  We say that a subset $F\subseteq S_1\times S_2\times \dotsb \times  S_k$ is \emph{cylindrical}  if
  it is cartesian as a subset of  $S_i\times \hat{S_i}$ for some $i\in
  [k]$, where $\hat{S_i}=\prod_{j\neq i} S_j$.
\end{enumerate}
\end{defn}

Let $\CM$ be a sufficiently saturated first order structure and let $X,Y,Z$ be strongly minimal
sets definable in $\CM$.   Let $F\subseteq X\times Y\times Z$ be a definable set of
Morley rank $2$. As usual, we say that $\bar a=(a_1,a_2,a_3)\in F$ is
\emph{generic in $F$}  over a set of parameters $C\subseteq   X\times Y\times Z$ if
$\RM(\bar a/C)=\RM(F)=2$. 

\begin{defn} \label{def: group-like}
 We say that a definable relation $F$ as above is \emph{group-like} if there is a group $G$ of Morley
rank $1$ and degree $1$ (hence, abelian) definable in $\CM$ over
a small set $C \subseteq X \times Y \times Z$,  elements
$g_1,g_2,g_3\in G$, and $\alpha_1\in X$, $\alpha_2\in Y$, $\alpha_3\in Z$
such that $\alpha_i$ and $g_i$ are inter-algebraic over $C$ for all $i \in [3]$ (i.e.~$\alpha_i \in \acl(g_i C)$ and $g_i \in \acl(a_i C)$), $\bar
\alpha=(\alpha_1,\alpha_2,\alpha_3)\in F$ is generic in $F$ over $C$
and $g_1\cdot g_2\cdot g_3=1$ in $G$.
\end{defn}

\medskip

We can now state our main result.

\begin{thm}[Main Theorem]\label{thm:main0} 
Let $X,Y,Z$ be strongly minimal sets definable in a sufficiently saturated structure  $\CM$ and let $F \subseteq X\times Y\times Z$ be a
definable set of Morley rank $2$. 
Assume in addition that $\CM$ is interpretable in a distal
structure (see Section \ref{sec: distal Zarank}). Then one of the following holds. 
\begin{enumerate}[(a)]
\item  There is some real $\varepsilon>0$ such that for all $A\subseteq_n X ,B
  \subseteq_n Y ,C \subseteq_n Z$
  we have 
\[|F\cap A\times B \times C | = O(n^{2-\varepsilon}). \] 
\item  $F$ is group-like.
\item $F$ is cylindrical.
\end{enumerate}
\end{thm}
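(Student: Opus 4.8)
The plan is to establish the dichotomy in contrapositive form: assuming $F$ is neither group-like nor cylindrical, I would derive the power-saving bound (a). Two external inputs drive the argument. On the combinatorial side, since $\CM$ is interpretable in a distal structure, every relation definable in $\CM$ inherits a distal cell decomposition and bounded VC density, and hence obeys the Zarankiewicz-type (Szemer\'edi--Trotter-type) incidence bounds available in distal structures; this is the only place the distality hypothesis is used, and it is the source of the power saving. On the model-theoretic side, I would use the rank calculus of strongly minimal sets (additivity of Morley rank and the exchange property of $\acl$) together with Hrushovski's group configuration theorem in $\CM^\eq$, which manufactures a definable group out of a suitable configuration of algebraic dependencies.

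First I would normalize $F$ by disposing of cylindricality. Writing $F_c := \{(x,y) \in X \times Y : (x,y,c) \in F\}$ for the fiber over $c \in Z$, and similarly for the other coordinates, I regard $F$ as a definable family of rank-$1$ ``curves'' $\{F_c : c \in Z\}$ in $X \times Y$ (the generic fiber has Morley rank $2 - 1 = 1$ as soon as $\pi_Z \colon F \to Z$ is dominant). Using additivity of Morley rank and strong minimality, if any of the three coordinate projections $\pi_{XY}, \pi_{XZ}, \pi_{YZ}$ fails to be dominant with finite generic fibers, then $F$ contains an infinite grid $I \times J$ in the corresponding $S_i \times \hat S_i$, i.e. $F$ is cylindrical and we are in case (c). So after this reduction I may assume all three projections are dominant and generically finite-to-one.

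The heart of the matter is a general-position dichotomy. Following the Elekes--Szab\'o counting scheme, the count $|F \cap A \times B \times C|$ is controlled by an incidence/Zarankiewicz estimate for an auxiliary binary relation $\Phi$ definable from $F$ (encoding the coincidences among the fibers, e.g. when two grid points are joined by a common curve). The key dichotomy is then purely about $\Phi$: if $\Phi$ is $K_{k,k}$-free for some fixed finite $k$, the distal Zarankiewicz bound is strictly sub-quadratic and yields $|F \cap A \times B \times C| = O(n^{2 - \varepsilon})$, giving case (a). If instead $\Phi$ is not $K_{k,k}$-free for every $k$, then by saturation and compactness $\Phi$ contains an infinite complete bipartite configuration $I \times J$. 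Interpreting this infinite grid back through the definition of $\Phi$ produces a positive-dimensional family of coincidences among the curves. Chasing the ranks, such a family either forces a cartesian product inside $F$ --- returning us to case (c) --- or exhibits a $1$-dimensional family of symmetries of the curve family. In the latter case I would assemble the corresponding six-point group configuration in $\CM^\eq$ and apply Hrushovski's theorem to recover a definable group $G$; because the configuration is built from rank-$1$ objects, $G$ has Morley rank $1$, and passing to the connected component gives degree $1$ (hence $G$ abelian). The inter-algebraicity of the configuration points with elements $g_1, g_2, g_3 \in G$ and the relation $g_1 g_2 g_3 = 1$ are read directly off the configuration, so $F$ is group-like, which is case (b).

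The step I expect to be the main obstacle is the passage from the combinatorial failure of the Zarankiewicz bound to a clean group configuration. One must (i) choose the auxiliary relation $\Phi$ so that its $K_{k,k}$-freeness corresponds exactly to the rank-theoretic general-position condition, (ii) convert ``$\Phi$ contains an infinite grid'' into a precise statement about excess intersection or excess coincidence of the curves, and (iii) verify that the resulting six points are in sufficiently general position --- pairwise independent of the correct rank with no spurious algebraic dependencies --- so that the group configuration theorem genuinely outputs a rank-$1$ group rather than a finite or rank-$0$ object. Keeping the two flavors of cylindricality cleanly separated from the honest group case throughout this analysis is the delicate bookkeeping at the center of the proof.
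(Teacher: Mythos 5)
Your overall architecture does match the paper's: dispose of cylindricality so that $F$ becomes $\Delta$-algebraic (uniformly finite fibers in every pair of coordinates), pass to an auxiliary binary ``coincidence'' relation (the paper's $G\subseteq Y^2\times Z^2$, recording when two grid points lie on a common curve $F_x$), use the distal cutting lemma to get a Zarankiewicz bound $O(n^{3/2-\delta})$ in the non-cartesian case, transfer it to the $O(n^{2-\varepsilon})$ bound for $F$ by Cauchy--Schwarz, and feed the cartesian case into Hrushovski's group configuration theorem. But the step you yourself flag as ``the main obstacle'' is a genuine gap, and it is exactly where the paper's new model-theoretic content lives; as written, your proposal does not contain the idea needed to close it.

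Two concrete problems. First, your dichotomy ``$\Phi$ is $K_{k,k}$-free versus $\Phi$ contains an infinite grid'' is too crude. An infinite grid can a priori be supported over a definable subset of Morley rank $\leq 1$ of $\tY$ or $\tZ$, in which case it neither produces a group nor obstructs clause (a). This is why the paper's dichotomy (Theorem \ref{thm:main-4}) is stated robustly: either $\Phi$ becomes non-cartesian after deleting definable sets $Y_0,Z_0$ of rank $\leq 1$ --- and clause (a) is insensitive to such deletions by the counting in Corollary \ref{cor:point-set-count} and Proposition \ref{prop:three-two} --- or else a special configuration exists. Second, and this is the real missing idea: an infinite grid, even a robust one, does not by itself hand you the extra point of the group configuration. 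What Proposition \ref{prop:w-case-b} needs is an element $t\in\bigl(\acl(\beta)\cap\acl(\gamma)\bigr)\setminus\acl(\emptyset)$ for some $(\beta,\gamma)\in\Phi(\UU)$ with $\beta\ind_M\gamma$, and producing $t$ is where the paper invokes local stability: $\Phi$ is a stable relation, so one has the pop types of Definition \ref{def: pop type}, the definability of their $\Phi$-definitions $d_p^\Phi$, and the canonical parameters $[p]\in\CM^{\eq}$. Robust cartesianness forces \emph{infinitely many} pop types (otherwise delete one rank-$1$ set inside each and apply Proposition \ref{prop:basic-eq} to land in the non-cartesian case), hence some $[p]\notin\acl(\emptyset)$; then $t:=[p]$ is algebraic over any realization $\gamma$ of a matching pop type $q$, and is inter-algebraic with $[q]\in\acl(\beta)$, which is precisely the sixth point of the $\acl$-diagram fed to the group configuration theorem. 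Your steps (i)--(iii) cannot be carried out without this canonical-parameter mechanism or a substitute for it. Relatedly, your side branch ``the infinite grid may force a cartesian product inside $F$, returning us to case (c)'' misreads how the cases separate: once $F$ is $\Delta$-algebraic it can never contain a grid, and a (robust) grid in the coincidence relation is the group case, never the cylindrical one.
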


\begin{rem}
Theorem \ref{thm:main0}  can be viewed as a generalization of the 
Elekes-Szab\'{o} Theorem \cite{ES} which established it for $\CM = \mathbb{C}$ the field of complex numbers (a strongly minimal structure), which is interpretable in the field of reals --- a distal structure.

\end{rem}

\begin{rem}
Various improvements of the Elekes-Szab\'{o} theorem, including explicit bounds on $\varepsilon$,  have been obtained \cite{raz2016polynomials, deZ, wang2015exposition}.
In our general situation we don't optimize the bounds, even though they can be calculated explicitly in terms of the size of the available distal cell decomposition, see Section \ref{sec: distal Zarank}.
\end{rem}

\begin{rem}
After the completion of this paper, generalizations to definable hypergraphs of arbitrary arity and dimension were obtained in \cite{bays2018projective} over $\mathbb{C}$, in \cite{raz2018expanding} for a higher arity version of the Elekes-R\'onyai theorem over $\mathbb{C}$, and in \cite{ChePeSt} for hypergraphs of arbitrary arity and dimension definable in arbitrary stable structures with distal expansions, as well as in arbitrary $o$-minimal structures.
\end{rem}

%

\begin{prop}If the Morley degree of $F$ is $1$, then  the three cases in Theorem \ref{thm:main0} are mutually exclusive.	
\end{prop}

\begin{proof}
\emph{(a) and (c) are incompatible.} Assume $F$ is cylindrical, say $F \subseteq (X\times Y) \times
  Z$ is cartesian. Then for any $n$ there are some $D \subseteq
  X\times Y, C \subseteq Z$ with $|D|=|C|=n$ such that $D \times C \subseteq F$. Let $A ,B$ be the projections of $D$ on $X$ and $Y$, respectively. Then $|A|, |B| \leq n$ and $|F \cap A \times B \times C|\geq n^2$.
  
\emph{(b) and (c) are incompatible.} By assumption $F$ has Morley rank $2$ and degree $1$, so it contains a unique generic type. We work over some saturated model. Assume that $F$ is cylindrical, say $F \subseteq (X \times Y) \times Z$ is cartesian. Then there exist $\bar{a} = (a_1,a_2) \in X \times Y$ and $b \in Z$ such that $(a_1, a_2, b) \in F, \bar{a} \notin \acl(b)$ and $b \notin \acl(\bar{a})$. Since $Z$ is strongly minimal, it follows that $\bar{a} \ind b$ and $(\bar{a},b)$ is generic in $F$. So a generic of $F$ has to be of the form $(\alpha, \beta, \gamma)$ with $\RM(\alpha \beta) = \RM(\gamma) = 1$ and $\alpha \beta \ind \gamma$. On the other hand, if $F$ is group-like, then for  its generic $(\alpha, \beta, \gamma)$ we must have $\RM(\alpha \beta) = 2$ and $\gamma \in \acl(\alpha,\beta)$.
  
 \emph{(a) and (b) are incompatible.}
 Let $G$ be a definable abelian group with $\RM(G) = 1$, and let $F_G \subseteq G^3$ be the (non-definable) relation given by 
$$(a, b, c) \in F_G :\iff a \ind b \textrm{ are generic in }G \textrm{ and } a + b + c = 0.$$

First we establish high edge count for the relation $F_G$.

\begin{claim}\label{cla: high count on groups}
	There exists some $c = c(G) \in \mathbb{R}_{>0}$ such that for arbitrary large $n \in \mathbb{N}$ there exist some $A,B,C \subseteq_n G$ such that:
 $$|\{F_G \cap (A \times B \times C) \}| \geq c n^2.$$
\end{claim}
\begin{proof}[Proof of Claim \ref{cla: high count on groups}]

Assume first that $G$ has a generic type $p\left(x\right)$ such
that some/any realization of $p$ is of infinite order. Choose some
$a,b\models p$ with $a\ind b$, then $nb\in\acl\left(b\right)$ and
$b\in\acl\left(nb\right)$, hence $a\ind nb$ and $nb$ is generic, 
for all $n\in\mathbb{N}$. Fix $n\in\mathbb{N}$, and consider the
set $A=\left\{ a,a+b,a+2b,\ldots,a+(n-1)b\right\} $, then $|A| = n$. By forking calculus, the elements of $A$
are all generic in $G$ and pairwise independent: $a+sb\ind a+tb$ for $s\neq t$.
Note that for any $s,t < \frac{n}{2}$ we have $(a+sb) + (a+tb) = a + (s+t)b \in A$ as $s+t < n$, hence taking $B := A$ and $C:= -A$, we see that 
$$|F_G \cap (A \times B \times C)| \geq \left|\left\{(s,t): s\neq t \textrm{ and } s,t < \frac{n}{2} \right\} \right| \geq  \left(\frac{n}{2} \right)^2 - \frac{n}{2} \geq \frac{1}{6} n^2,$$
for all sufficiently large $n$, as wanted.

Otherwise, all generics of $G$ have the same finite order, say $k$, and as every elements in a stable group is a product of two generics, by commutativity this implies that $kx=0$ for all $x \in G$.
Then, by classification of abelian groups, $px=0$ holds in $G$ for some prime $p$, and  $G$ is an $\mathbb{F}_{p}$-vector space.
Fix any $m \in \mathbb{N}$ and choose independent generic elements $a_{1},\ldots,a_{m}$ in $G$.
Let $A:=\left\langle a_{1},\ldots,a_{m}\right\rangle $ be the $\mathbb{F}_p$-linear span of $\{a_1, \ldots, a_m \}$, then $\left|A\right|=p^{m} := n$.
By forking calculus we have: every $0 \neq a \in A$ is generic in $G$, and for any $a,b \in A$, $a \ind b$ if and only if $a$ and $b$ are linearly independent over $\mathbb{F}_p$.
But then we have
$$|F_G \cap A^3| \geq |\{(a,b) \in A^2 : a \textrm{ and } b \textrm{ are } \mathbb{F}_p\textrm{-linearly independent} \}|$$
$$= (p^m -1)(p^m -p) \geq \frac{1}{4}n^2$$
for all $n \gg p$.
\end{proof}

Now assume that $F$ is group-like, witnessed by $G$ and $\alpha_i, g_i$ (we suppress the parameter set to simplify the notation). We want to transfer high edge count from $F_G$ to $F$. As $\alpha_i$ is inter-algebraic with $g_i$, we can choose some formulas $\varphi_i$ and $k \in \mathbb{N}_{\geq 1}$ such that 
\begin{enumerate}
	\item[$(\dagger)$] $\models \varphi_i(\alpha_i, g_i)$, and $|\varphi_i(\alpha,M)|, |\varphi_i(M,g)| \leq k$ for all $i \in \{1,2,3\}$ and $\alpha \in X \cup Y \cup Z, g \in G$.
\end{enumerate}
As $G$ has Morley rank and degree $1$, there is a unique generic type, hence by stationarity we have the equality of types $g_1 g_2 \equiv g'_1 g'_2$ for any generic $g_1 \ind g_2, g'_1 \ind g'_2$ in $G$. As $\models F(\alpha_1, \alpha_2, \alpha_3)$ by assumption, taking an automorphism we see that:
\begin{enumerate}
	\item[$(\dagger \dagger)$] for every $(g_1, g_2, g_3) \in F_G$ there exist some $\alpha_1 \in \varphi_1(X,g_1), \alpha_2 \in \varphi_2(Y, g_2), \alpha_3 \in \varphi_3(Z,g_3)$ such that $(\alpha_1, \alpha_2, \alpha_3) \in F$.
\end{enumerate} 
Now let $c \in \mathbb{R}_{>0}$ be as given by Claim \ref{cla: high count on groups} for $G$. Let $n \in \mathbb{N}$ be arbitrarily large, and let $A,B,C \subseteq_n G$ be such that $|F_G \cap (A \times B \times C)| \geq c n^2$. We define $A' := \bigcup_{g \in A} \varphi(X,g), B' := \bigcup_{g \in B} \varphi(Y,g)$ and $C' := \bigcup_{g \in C} \varphi(Z,g)$. Then $|A'|,|B'|, |C'| \leq kn := n'$ by $(\dagger)$. 
We define a function 
$$f: F_G \cap (A \times B \times C) \to F \cap (A' \times B' \times C'),$$
 where for $\bar{g} = (g_1, g_2, g_3) \in F_G \cap (A \times B \times C)$  we take $f(\bar{g})$ to be an arbitrary element $\bar{\alpha} = (\alpha_1, \alpha_2, \alpha_3) \in F \cap \left(\varphi_1(X,g_1) \times \varphi_2(Y, g_2)\times \varphi_3(Z,g_3) \right)$, it exists by $(\dagger \dagger)$. But   as $|\varphi_i(\alpha_i, G)| \leq k$ for any $(\alpha_1, \alpha_2, \alpha_3) \in X \times Y \times Z$ by $(\dagger)$, we have that $|f^{-1}(\{ \bar{\alpha} \})| \leq k^3$ for every $\bar{\alpha} \in \Im(f)$.  
Hence 
$$|F \cap (A' \times B' \times C')| \geq \frac{1}{k^3}|F_G \cap (A \times B \times C)| \geq \frac{c}{k^3} n^2 \geq \frac{c}{k^5}(n')^2 \geq c' (n')^2$$
with $c' := \frac{c}{k^5} >0$. As $n'$ can be taken arbitrarily large, this shows that $F$ doesn't satisfy (a).
\end{proof}

The proof of Theorem~\ref{thm:main0} consists of three main ingredients: a bound on the number of edges for non-cartesian relations in our context (i.e. Theorem \ref{thm:zippel} established in Section \ref{sec: cartesian relations} using local stability and the distal cutting lemma from \cite{chernikov2016cutting}); Hrushovski's group configuration theorem in stable theories; and the construction of the group configuration in the cartesian case connecting the two aforementioned results. In Section \ref{sec:main-theorem} this last part is reduced to a certain dichotomy for binary relations between sets of rank $2$, and this dichotomy is proved in Section \ref{sec:proof-theor-refthm:m}).

\subsection*{Acknowledgements}
We thank the referee for some very insightful suggestions on improving the paper.
Chernikov was supported by the NSF Research Grant DMS-1600796, by the NSF CAREER grant DMS-1651321 and by an Alfred P. Sloan Fellowship.

Starchenko was supported by the NSF Research Grant DMS-1800806.

\section{Bounds for non-cartesian relations}\label{sec: cartesian relations}

\subsection{Zarankiewicz for distal relations}\label{sec: distal Zarank}
As is well-known, if $E \subseteq A \times B$ is a bipartite graph, with $A,B$ finite, such that no two distinct points in $A$ have more than 
$s$ common neighbours in $B$ (i.e.~$E$ is $K_{2,s}$-free), then a simple argument via the Cauchy-Schwarz inequality implies that the number of edges of $E$ is at most $O_s(|A||B|^{\frac{1}{2}} + |B|)$	
 --- and this is optimal in general. The classical theorem of Szemer\'edi--Trotter \cite{szemeredi1983extremal} and its generalizations improve on this inequality, reducing the power by some $\varepsilon>0$, in the situation when $A,B$ are points in a Euclidean space and the graph $E$ is given by some (semi-)algebraic relation (we refer to \cite{shefferincidence} for a general introduction to the area of incidence geometry). In particular, a higher-dimensional ``point -- algebraic variety'' incidence bound due to Elekes and Szab\'o \cite[Theorem 9]{ES} was crucial in their proof of the $\varepsilon$-gap in the exponent in Theorem \ref{thm:main0}(a) for $\mathcal{M} = \mathbb{C}$.

In this section we generalize (and strengthen) the incidence bound due to Elekes and Szab\'o in \cite{ES} to arbitrary graphs definable in \emph{distal} structures. Distal structures constitute a subclass of purely unstable NIP structures \cite{simon2013distal} that contains all $o$-minimal structures, various expansions of the field $\mathbb{Q}_p$ and the valued differential field of transseries (we refer to the introduction of \cite{distal} or \cite{DistValFields}  for a general discussion of distality and references). It is demonstrated in \cite{distal, chernikov2016cutting} that many of the results in semialgebraic incidence combinatorics generalize to relations definable in distal structures.

We recall some of the notions and results from \cite{chernikov2016cutting}, and refer to that article for further details. The following definition captures a combinatorial ``shadow'' of the existence of a nice topological cell decomposition (as e.g.~in $o$-minimal theories or in the $p$-adics).
 
 \begin{defn}\label{def: distal cell decomp}\cite[Section 2]{chernikov2016cutting} Let $X,Y$ be infinite sets, and $E\subseteq X\times Y$ a binary relation.
 \begin{enumerate}
 	\item Let $A\subseteq X$. For $b\in Y$, we say that $E_{b}$ \emph{crosses} $A$ if $E_{b}\cap A\neq\emptyset$ and $\left(X\setminus E_{b}\right)\cap A\neq\emptyset$.
 	\item A set $A\subseteq X$ is \emph{$E$-complete over $B\subseteq Y$} if $A$ is not crossed by any $E_{b}$ with $b\in B$.
 	\item A family $\mathcal{F}$ of subsets of $X$ is a \emph{cell decomposition for $E$ over $B\subseteq Y$} if $X\subseteq\bigcup\mathcal{F}$ and every $A\in\mathcal{F}$ is $E$-complete over $B$.
 	\item A \emph{cell decomposition for $E$} is a map $\mathcal{T}: B \mapsto \mathcal{T}(B)$ such that for each finite $B\subseteq Y$, $\mathcal{T}\left(B\right)$ is a cell decomposition for $E$ over $B$.
 	\item A cell decomposition $\mathcal{T}$ is \emph{distal} if there exist $k\in\mathbb{N}$ and a relation $D\subseteq X\times Y^{k}$ such that for all finite $B\subseteq Y$, $\mathcal{T}\left(B\right)=\{D_{\left(b_{1},\ldots,b_{k}\right)}:b_{1},\ldots,b_{k}\in B\mbox{ and }D_{\left(b_{1},\ldots,b_{k}\right)}\mbox{ is }E\mbox{-complete over }B\}$.
 	\item The \emph{exponent} of a cell decomposition $\mathcal{T}$ is the smallest $t \in \mathbb{N}$ for which there exists some $c \in \mathbb{R}_{>0}$ such that $\left|\mathcal{T}\left(B\right)\right| \leq c \left|B\right|^{t}$ for all finite sets $B \subseteq Y$.
 \end{enumerate}
 
 Existence of ``strong honest definitions'' established in \cite{chernikov2015externally} shows that every relation definable in a distal structure admits a distal cell decomposition (of some exponent).
 	
 \end{defn}
\begin{fact}
Assume that $E$ is definable in a distal structure. Then $E$ admits a distal cell decomposition.
Moreover, in this case the relation $D$ in Definition \ref{def: distal cell decomp}(5) is definable in $\mathcal{M}$.
	
\end{fact}

The following definition abstracts from the notion of cuttings in incidence geometry (see the introduction of \cite{chernikov2016cutting} for an extended discussion).
\begin{defn}
Let $X,Y$ be infinite sets, $E\subseteq X\times Y$ and $t \in \mathbb{N}_{>0}$.
  We say that
  $E$ \emph{admits cuttings with exponent $t$} if there is some constant $c \in \mathbb{R}_{>0}$ satisfying the following. For any $B \subseteq_n Y$ and any $r \in \mathbb{R}$ with $1 < r < n$ there are some sets $X_1, \ldots, X_s \subseteq X$ covering $X$ with $s \leq c r ^t$ and such that each $X_i$ is crossed by at most $\frac{n}{r}$ of the fibers $\{ E_b: b \in B \}$.
  \end{defn}

In the case $r > n$, an $r$-cutting is equivalent to a distal cell decomposition (sets in the covering are not crossed at all). And for $r$ varying between $1$ and $n$, $r$-cutting allows to control the trade-off between the number of cells in a covering and the number of times each cell is allowed to be crossed.

\begin{fact}(Distal cutting lemma, \cite[Theorem 3.2]{chernikov2016cutting})\label{fac: distal implies cutting}
Assume $E\subseteq X\times Y$ admits a distal cell decomposition $\mathcal{T}$ of exponent $t \in \mathbb{N}$. Then $E$ admits cuttings with exponent $t$.
\end{fact}

The following classical fact generalizes the Cauchy-Schwarz bound discussed at the beginning of the section.
\begin{fact}\cite{kovari1954problem}\label{fac: Kovari} Assume $E \subseteq A\times B$ is $K_{d,s}$-free, for some $d,s \in \mathbb{N}$ and $A,B$ finite. Then $|E \cap A \times B| \leq s^{\frac{1}{d}} |A||B|^{1 - \frac{1}{d}} + d |B|$.
\end{fact}
Cuttings allow to establish super-Cauchy-Schwarz estimates on the number of edges. Here we will need the following theorem, which is a slight generalization of \cite[Theorem 5.7]{chernikov2016cutting}.

\begin{thm}\label{thm: Zar gen ass}
	Assume that $E\subseteq X\times Y$ is $K_{k,k}$-free for some $k \in \mathbb{N}$, admits cuttings $\mathcal{T}$ with exponent $t \in \mathbb{N}_{\geq 2}$, and satisfies the following with $d \in \mathbb{N}_{ \geq 2}$:  there exists some $\alpha_1 \in \mathbb{R}_{>0}$  such that 
	\begin{equation}\tag{$*$}\label{eq: Zar gen ass}
	|E \cap A \times B| \leq \alpha_1 \left(|A||B|^{1-\frac{1}{d}}+|B| \right) \textrm{ for all finite } A\subseteq X, B \subseteq Y.
	\end{equation}
	Then there exists some $\alpha = \alpha(\alpha_1, k, \mathcal{T}, d) \in \mathbb{R}_{>0}$ such that 
	$$|E \cap A \times B| \leq \alpha \left(|A|^{\frac{(t-1)d}{td-1}}|B|^{\frac{td-t}{td-1}}+|A| + |B| \right)$$
	 for all finite set $A\subseteq U, B \subseteq V$. Moreover, for fixed $k, \mathcal{T},d$, $\alpha(\alpha_1,k,\mathcal{T},d)$ is a sub-linear function in $\alpha_1$.
\end{thm}
Note that for both powers we clearly have $0 < \frac{(t-1)d}{td-1}, \frac{td-t}{td-1} < 1$.
\begin{rem}\label{rem : when star holds}
The assumption \eqref{eq: Zar gen ass} is satisfied in the following cases:
\begin{enumerate}
	\item The family of sets $\mathcal{F} = \{ E_b : b \in  Y\}$ has \emph{VC-density} at most $d$ and $E$ is $K_{k,k}$-free for some $k \in \mathbb{N}$. 
	
	Then \eqref{eq: Zar gen ass} holds for some $\alpha_1>0$ by \cite[Theorem 2.1]{fox2017semi},  and Theorem \ref{thm: Zar gen ass} specializes to \cite[Theorem 5.7]{chernikov2016cutting} (see \cite[Section 5.1]{chernikov2016cutting} for a discussion of VC-density).
	\item $E$ is $K_{d,s}$-free for some $s \in \mathbb{N}$.
	
	Indeed, in this case, by Fact \ref{fac: Kovari}, for all $A,B$ we have  $|E \cap A \times B| \leq \alpha_1(|A||B|^{1 - \frac{1}{d}} + |B|)$, with $\alpha_1 = \alpha_1(d,s) := s^{\frac{1}{d}} + d$.
	
	This case of Theorem \ref{thm: Zar gen ass} generalizes/strengthens \cite[Theorem 9]{ES} (more precisely, its dual), removing the logarithmic factor. 
	
	It follows that for fixed $t, \mathcal{T}$ and $d$, $\alpha$ depends sub-linearly on $s$.
	\end{enumerate}
\end{rem}

\begin{proof}[Proof of Theorem \ref{thm: Zar gen ass}]
	The result follows from the proof of \cite[Theorem 5.7]{chernikov2016cutting}. The assumption on the VC-density in the statement of \cite[Theorem 5.7]{chernikov2016cutting} is only used to conclude that  \eqref{eq: Zar gen ass} holds with some $\alpha_1 > 0$, by Remark \ref{rem : when star holds}(1).
	Similarly, the dual version of \eqref{eq: Zar gen ass} holds with some $\alpha_4 = \alpha_4(k,\mathcal{T}) > 0$:
	$$\lvert E \cap A \times B \rvert \leq \alpha_4 \left(|B||A|^{1-\frac{1}{t}} + |A| \right) \text{ for all $A \subseteq U, B \subseteq V$}.$$
	
	As explained in the proof there, this is true as $E$ admits cuttings of exponent $t$, hence has dual VC-density $\leq t$ (by \cite[Remark 5.5]{chernikov2016cutting}), so Remark \ref{rem : when star holds}(1) applies to $E$ with the roles of the variables exchanged.
	
	 The rest of the proof only relies on \eqref{eq: Zar gen ass}, dual \eqref{eq: Zar gen ass} and $K_{k,k}$-freeness. The fact that $\alpha(\alpha_1,k, \mathcal{T},d)$ is a linear function in $\alpha_1$ for fixed $k,\mathcal{T},d$ follows by a careful inspection of the proof.
\end{proof}

\begin{cor}
\label{cor: distal incidence bound}Assume that $E \subseteq X \times Y$ admits a distal cell decomposition $\mathcal{T}$ with exponent $t$ for some $t \in \mathbb{N}_{\geq 2}$ and $E$ is $K_{2,s}$-free for some $s \in \mathbb{N}_{\geq 2}$. Then there is some $\delta=\delta\left(t \right)>0$ and $c = c(\mathcal{T}) >0$ such that for
all $A\subseteq_n X,B\subseteq_n Y$
we have $\left|E\left(A,B\right)\right| \leq cs n^{\frac{3}{2}-\delta} $.

 We can take $\delta := \frac{1}{2(2t-1)}$.
\end{cor}

\begin{proof}
By Remark \ref{rem : when star holds}(2),  \eqref{eq: Zar gen ass} holds with $d=2$, and by Theorem \ref{thm: Zar gen ass} with $d=2$ we get 
that $E(A,B) \leq \alpha \left( n^{\frac{2t-2}{2t-1}} n^{\frac{t}{2t-1}} + 2n \right) \leq (\alpha+2)\left(n^{\frac{3}{2} - \delta} \right)$ with $\delta = \frac{1}{2(2t-1)} > 0$. And, by Remark \ref{rem : when star holds}(2), $\alpha$ depends sublinearly on $s$, that is $\alpha \leq s c$ for some $c = c(\mathcal{T})$.
\end{proof}

This corollary can be interpreted as follows in an incidence-like manner. Assume that, working in a distal structure $\mathcal{M}$, we are given definable subsets $X,Y$ of some powers of $\mathcal{M}$, and a definable family $(E_a)_{a \in X}$ of subsets of $Y$ parametrized by $X$, such that $E_a \cap E_{a'}$ has bounded finite size for every $a\neq a' \in X$ (such definable families are often called \emph{normal} in model theory, e.g.~ the family of lines in $\mathbb{C}^2$ is a normal family of curves since any two distinct lines can have at most one point in common). Then Corollary \ref{cor: distal incidence bound} says that given $n$ sets in the family and $n$ elements in $Y$, the number of ``incidences'' between them is at most $O(n^{\frac{3}{2} - \delta})$ for some $\delta > 0$ --- strictly better than the bound $O(n^{\frac{3}{2}})$ given by Cauchy-Schwarz, as explained in the beginning of the section.

In particular, this provides a version of the theorem of T{\'o}th \cite{toth2015szemeredi} with a weaker bound.

\subsection{Local stability}
For the rest of Section \ref{sec: cartesian relations} we assume that $\CM = (M, \ldots)$ is a sufficiently saturated structure, $\tY, \tZ$ are definable subsets, and that $\Phi \subseteq \tY \times \tZ$ is a stable relation. 

As usual, by a $\Phi$-definable set we mean a subset $B\subseteq
\tY$  that is a finite Boolean combination of  sets defined by
$\Phi(y,c)$, $c\in \tZ$.  We write $\Phi^* \subseteq \tZ \times \tY$ for the relation obtained from $\Phi$ by exchanging the roles of the variables. Similarly we have a notion of $\Phi^*$-definable subsets of $\tZ$.
We denote by $S_\Phi(M)$ the set of all complete
$\Phi$-types on $\tY$  over $M$ (equivalently, the set of all ultrafilters on the Boolean algebra of all $\Phi$-definable subsets of $\tY$),
and similarly we denote by  $S_{\Phi^*}(M)$ the set of all complete
$\Phi^*$-types on $\tZ$. If $\UU$ is an elementary extension of $\CM$, then for
an $M$-definable set $V$ we will denote by $V(\UU)$  the set of
elements of $\UU$ realizing a formula defining $V$. 
 We say that  a $\Phi$-type $p(y)$ is \emph{non-algebraic} if in some elementary extension of $\CM$ it has a realization  outside of $M$.

The following  are  some basic facts from local stability, all of which can be found in e.g. \cite[Chapter 1, Sections 1--3]{pillay1996geometric}.
\begin{fact}\label{fact-loc-tab-def}
\begin{enumerate}
	\item For  any $p(y) \in S_\Phi(M)$, the set $\{ c\in \tZ \colon \Phi(y,c)
\in p\}$ is definable by some $\Phi^*$-formula $d_p^{\Phi}(z)$. 

Moreover, this definition is \emph{uniform}, meaning that there is some $n \in \mathbb{N}$ and a formula $d^{\Phi}(\bar{y}, z)$, $\bar{y} = (y_1, \ldots, y_n)$,  given by a finite Boolean combination of formulas $\{ \Phi(y_i,z) : i \in [n]\}$, such that for every $p \in S_{\Phi}(M)$ we have that $d_p^{\Phi}(z)$ is of the form $d^{\Phi}(\bar{c},z)$ for some $\bar{c} \in ( \tilde{Y})^n$.
\item 
Similarly, for $q(z) \in S_{\Phi^*}(M)$, the set 
$\{ b\in \tY \colon \Phi(b,z)
\in q\}$ is uniformly $\Phi$-definable via $d_q^{\Phi^*}(y)$.
\end{enumerate}

\end{fact}

\begin{fact}\label{fact:phi-ind}
Let $\UU$ be an elementary  extension of $\CM$,  
$\beta\in \tY(\UU)$ and $\gamma\in \tZ(\UU)$. 
Then $\tp_\Phi(\beta/M\gamma)$ is finitely satisfiable in $M$ if and
only if  $\tp_{\Phi^*}(\gamma/M\beta)$ is finitely satisfiable in
$M$. 
\end{fact}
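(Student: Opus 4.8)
The statement is the symmetry of the coheir (finite satisfiability) relation for a stable formula, and I would treat it as such. By the symmetric roles of $\Phi$ and $\Phi^*$ (and of $\beta$ and $\gamma$) it suffices to prove one implication: assuming $\tp_\Phi(\beta/M\gamma)$ is finitely satisfiable in $M$, I show that $\tp_{\Phi^*}(\gamma/M\beta)$ is finitely satisfiable in $M$. Write $p := \tp_\Phi(\beta/M)$ and $q := \tp_{\Phi^*}(\gamma/M)$. By Fact \ref{fact-loc-tab-def}, $p$ is defined by a $\Phi^*$-formula $d_p(z)$ over $M$ and $q$ by a $\Phi$-formula $d_q(y)$ over $M$; since $d_p$ and $d_q$ are honest formulas I may evaluate them at $\gamma$ and at $\beta$ respectively.

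The first step is to reduce the fact to a single symmetric condition. Over the model $M$ a stable $\Phi$-type has a unique finitely satisfiable (coheir) extension to any larger parameter set, and that extension is exactly the definable one, i.e.\ $\Phi(y,c)$ belongs to it precisely when $\models d_p(c)$ (a standard fact of local stability, \cite[Ch.~1]{pillay1996geometric}). Since the only instance over $M\gamma$ whose truth value is not already decided by $p$ is $\Phi(y,\gamma)$, this yields the characterization: $\tp_\Phi(\beta/M\gamma)$ is finitely satisfiable in $M$ iff $\Phi(\beta,\gamma) \leftrightarrow\, \models d_p(\gamma)$. Applying the same reasoning to $\Phi^*$, $q$ and $\beta$ gives that $\tp_{\Phi^*}(\gamma/M\beta)$ is finitely satisfiable in $M$ iff $\Phi(\beta,\gamma) \leftrightarrow\, \models d_q(\beta)$. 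Hence the whole statement reduces to the symmetry of the defining formulas, namely $\models d_p(\gamma) \iff\, \models d_q(\beta)$, under the standing hypothesis that $\beta$ realizes the coheir of $p$ over $M\gamma$.

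To establish this symmetry I would argue by contradiction and extract the (binary) order property of $\Phi$, contradicting its stability. Suppose, say, $\models d_p(\gamma)$ but $\not\models d_q(\beta)$. Unfolding $d_p$ and $d_q$ as Boolean combinations of instances $\Phi(b,z)$ with $b\in\tY(M)$ and $\Phi(y,c)$ with $c\in\tZ(M)$, and using the finite satisfiability of $\tp_\Phi(\beta/M\gamma)$ to realize inside $M$ the relevant finite approximations of the $\beta$-side, I would build a coheir (Morley) sequence and read off from it two sequences $(b_n)_{n<\omega}$ in $\tY(M)$ and $(c_n)_{n<\omega}$ in $\tZ(M)$ with $\Phi(b_n,c_m)$ holding precisely when $n\le m$. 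This is exactly a witness to the order property of $\Phi$, so the stability of $\Phi$ is contradicted, and the required symmetry --- hence the fact --- follows.

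The main obstacle is the last step: turning the one-sided disagreement of the two definitions into a genuine alternating ladder of $M$-instances. The care lies in arranging the sequence so that finite satisfiability in $M$ repeatedly supplies the $M$-points $b_n$ realizing the correct finite part of the type over the already-built $c_m$'s, while the definability of $p$ and of $q$ forces the truth values to stabilize into the alternating pattern; packaging the Boolean combinations occurring in $d_p$ and $d_q$ into a single instance pattern is the delicate bookkeeping. All of this is routine local stability, which is precisely why the statement is quoted here from \cite{pillay1996geometric} rather than reproved in full.
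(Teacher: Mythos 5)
There is no proof in the paper to compare against: the statement is imported verbatim as a Fact, with all of the local-stability facts of that subsection attributed to \cite[Chapter 1, Sections 1--3]{pillay1996geometric}. What you have written is a reconstruction of the standard textbook argument, and in outline it is correct. Your decomposition is: (i) over the model $M$ a $\Phi$-type has a unique coheir extension to $M\gamma$, namely the definable one, so finite satisfiability of $\tp_\Phi(\beta/M\gamma)$ in $M$ is equivalent to $\Phi(\beta,\gamma)\leftrightarrow\,\models d^\Phi_p(\gamma)$ (and dually for $\Phi^*$); (ii) the symmetry $\models d^\Phi_p(\gamma)\iff\,\models d^{\Phi^*}_q(\beta)$, obtained by extracting the order property from a failure. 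Three remarks. First, (ii) is unconditional: since $d^\Phi_p$ has parameters in $M$, whether $\models d^\Phi_p(\gamma)$ depends only on $q=\tp_{\Phi^*}(\gamma/M)$, so your ``standing hypothesis'' plays no role there; indeed (ii) is exactly the equivalence (3)$\Leftrightarrow$(4) of the paper's Fact~\ref{fact:stable-sym}, which the paper also quotes rather than proves. Second, the ladder needs only finite satisfiability in $M$ of $p$ and $q$ themselves (automatic, as every formula in a type over a model is consistent and over $M$, hence realized in $M$), not of the extension $\tp_\Phi(\beta/M\gamma)$; concretely, assuming $d^\Phi_p\in q$ and $d^{\Phi^*}_q\notin p$, pick $b_n\in\tY(M)$ realizing the fragment $\{\neg d^{\Phi^*}_q(y)\}\cup\{\Phi(y,c_i):i<n\}$ of $p$ and then $c_n\in\tZ(M)$ realizing the fragment $\{d^\Phi_p(z)\}\cup\{\neg\Phi(b_i,z):i\le n\}$ of $q$; this gives $\Phi(b_n,c_m)$ iff $m<n$, the order property, with no delicate bookkeeping about the Boolean structure of the definitions. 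Third, your step (i) --- existence, uniqueness, and identification of the coheir extension with the definable extension over a model --- is a fact of essentially the same depth as the statement being proved, and its usual proof runs through the same order-property argument; quoting it from \cite{pillay1996geometric} is legitimate (the paper does the same for the whole Fact), but the resulting argument is not more self-contained than the paper's citation, only more explicit about where stability enters.
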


\begin{defn}
  For  an elementary  extension $\UU$ of $\CM$,  
$\beta\in \tY(\UU)$ and $\gamma\in \tZ(\UU)$ we say that $\beta$ and
$\gamma$ are \emph{$\Phi$-independent over $\CM$} if 
$\tp_\Phi(\beta/M\gamma)$ is finitely satisfiable in $M$. 
\end{defn}

The following is a consequence of the fundamental  theorem of local
stability (it was also used in e.g.~\cite{hrushovski2012stable}).

\begin{fact}
  \label{fact:stable-sym} 
For types $p(y)\in S_\Phi(M)$, $q(z)\in S_{\Phi^*}(M)$ the
following conditions are equivalent. 
\begin{enumerate}
\item  There are  realizations $\beta\models p(y)$
and   $\gamma\models q(z)$  that are $\Phi$-independent over $M$
and such that $\models
  \Phi(\beta,\gamma)$;
\item  for any   realizations $\beta\models p(y)$
 and $\gamma\models q(z)$ that are $\Phi$-independent over $M$ we have  $\models
  \Phi(\beta,\gamma)$;
\item $d^\Phi_p(z)\in q(z)$;
\item  $d^{\Phi^*}_q(y)\in p(y)$.
\end{enumerate}
\end{fact}

For types $p(y)\in S_\Phi(M)$ and $q(z) \in S_{\Phi^*}(M)$
we write $\Phi(p,q)$ if one of the equivalent conditions of Fact~\ref{fact:stable-sym}
holds.

\subsection{Cartesian relations and popular types}
\label{sec:cart-relat-popul}

\begin{prop}\label{prop:cart-types} 
The relation  $\Phi$ is cartesian  if and only if  
$\models \Phi(p,q)$ for some non-algebraic types $p(y)\in S_\Phi(M)$ and $q(z)\in
S_{\Phi^*}(M)$.  
\end{prop}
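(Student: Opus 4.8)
The plan is to prove both directions of the biconditional, with the interesting content in showing that populated types produce a cartesian structure.

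\textbf{The easy direction} ($\Leftarrow$ fails, $\Rightarrow$ actually): Suppose $\Phi$ is cartesian, so $I \times J \subseteq \Phi$ for some infinite $I \subseteq \tY$ and $J \subseteq \tZ$. By saturation and compactness, I would extract from the infinite sets $I, J$ a pair of types: concretely, since $I$ is infinite and each element of $I$ is $\Phi$-related to every element of $J$, the partial $\Phi$-type asserting $\Phi(y,c)$ for all $c \in J$ (together with $y \neq a$ for each $a \in I$, to force non-algebraicity) is finitely satisfiable, hence extends to a complete non-algebraic type $p(y) \in S_\Phi(M)$. Symmetrically one obtains a non-algebraic $q(z) \in S_{\Phi^*}(M)$. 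The point is then to verify $\Phi(p,q)$ using criterion (3) of Fact~\ref{fact:stable-sym}: one checks that $d_p^\Phi(z)$, the defining formula for $\{c : \Phi(y,c) \in p\}$, lies in $q$, which should follow since $J$-many witnesses for $\Phi(y,c) \in p$ are in $q$'s support. Some care is needed to guarantee the types are genuinely non-algebraic, but the infinitude of $I$ and $J$ supplies this.

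\textbf{The main direction} ($\Rightarrow$): Assume $\Phi(p,q)$ for non-algebraic $p(y) \in S_\Phi(M)$ and $q(z) \in S_{\Phi^*}(M)$. I want to build infinite sets $I \subseteq \tY(M)$ and $J \subseteq \tZ(M)$ with $I \times J \subseteq \Phi$. Here is where the local-stability symmetry (Facts~\ref{fact:phi-ind} and~\ref{fact:stable-sym}) does the work. The idea is to realize $p$ and $q$ in a large elementary extension $\UU$ by elements that are $\Phi$-independent over $M$; by Fact~\ref{fact:stable-sym}, any such $\Phi$-independent pair $\beta \models p$, $\gamma \models q$ satisfies $\Phi(\beta,\gamma)$. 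I would then construct, by a back-and-forth / finite-satisfiability argument, a mutually $\Phi$-independent Morley-style sequence: elements $\beta_1, \beta_2, \ldots$ each realizing $p$ over $M$ together with the previously chosen $\gamma$'s, and $\gamma_1, \gamma_2, \ldots$ each realizing $q$ over $M$ together with the previously chosen $\beta$'s, arranged so that $\tp_\Phi(\beta_i/M\gamma_j)$ is finitely satisfiable whenever appropriate. Finite satisfiability in $M$ is precisely what lets me pull the configuration back down to actual parameters in $\CM$: because each required instance $\Phi(\beta_i, \gamma_j)$ (and each distinctness condition) is witnessed in $M$, I can replace the $\beta_i, \gamma_j$ by genuine elements of $\tY(M), \tZ(M)$ realizing the same relevant $\Phi$- and $\Phi^*$-formulas, obtaining the desired infinite cartesian product inside $\Phi$.

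\textbf{The main obstacle} I anticipate is organizing the simultaneous mutual $\Phi$-independence of the whole array $\{(\beta_i,\gamma_j)\}$ so that \emph{every} cross-pair is $\Phi$-related, not just the diagonal or a one-sided family; this is exactly the step where the equivalence of finite satisfiability for $\Phi$ and $\Phi^*$ (Fact~\ref{fact:phi-ind}) becomes essential, since it guarantees that independence established on one side transfers to the other. Once the $\Phi$-independent array exists in $\UU$ and each of the finitely-many-at-a-time conditions $\Phi(\beta_i,\gamma_j)$ holds, transferring down to $M$ via finite satisfiability is routine. I would also remark that non-algebraicity of $p$ and $q$ is what ensures the extracted sets $I$ and $J$ are genuinely infinite rather than finite, closing the argument.
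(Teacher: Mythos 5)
Your overall route (compactness for one implication, local stability for the other) matches the paper's, but your execution of the direction ``cartesian $\Rightarrow \models\Phi(p,q)$'' has a genuine flaw: you extract $p$ and $q$ \emph{independently of each other} (first $p$ from $\{\Phi(y,c):c\in J\}$, then $q$ ``symmetrically'' from $\{\Phi(b,z):b\in I\}$) and then hope that criterion (3) of Fact~\ref{fact:stable-sym} holds for this pair. It need not. Concretely, let $\tY=Y_1\sqcup Y_2$, $\tZ=Z_1\sqcup Z_2$ (all infinite, named by unary predicates, so everything is stable) and $\Phi=\bigl(Y_1\times(Z_1\cup Z_2)\bigr)\cup\bigl(Y_2\times Z_1\bigr)$, with cartesian witness $I=Y_1$, $J=Z_1$. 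Your compactness step may return for $p$ the generic type of $Y_2$ (it does contain $\Phi(y,c)$ for all $c\in Z_1$ and is non-algebraic), and your symmetric step may return for $q$ the generic type of $Z_2$ (it contains $\Phi(b,z)$ for all $b\in Y_1$ and is non-algebraic); but then $d^\Phi_p$ defines $Z_1$ while $q$ concentrates on $Z_2$, so $\neg d^\Phi_p(z)\in q$ and $\Phi(p,q)$ fails. The repair is exactly the paper's proof: choose $q$ \emph{after} $p$, and not symmetrically. Since $\Phi(y,c)\in p$ for every $c\in J$, the $M$-definable set $d^\Phi_p(z)$ contains $J$, hence is infinite, so there exists a non-algebraic $q\in S_{\Phi^*}(M)$ containing the formula $d^\Phi_p(z)$; criterion (3) then holds by construction. (A smaller slip in the same step: inequations $y\neq a$ are not $\Phi$-formulas, so they cannot be part of a $\Phi$-type; one realizes $\{\Phi(y,c):c\in J\}\cup\{y\neq a:a\in M\}$ in an elementary extension and takes the $\Phi$-part of the realization's type, and the inequations must range over all of $M$, not just over $I$, to guarantee non-algebraicity.)

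For the converse (which the paper dismisses in one sentence, appealing to finite satisfiability in both variables via Fact~\ref{fact:phi-ind}), your plan of building a mutually $\Phi$-independent array in $\UU$ and transferring down is workable but has two unjustified steps as written: (i) distinctness conditions $\beta_i\neq\beta_{i'}$, $y\neq b_i$ are not $\Phi$-formulas, so finite satisfiability of $\Phi$-types in $M$ does not witness them, and indeed your array construction does not even ensure the $\beta_i$ are pairwise distinct; (ii) witnessing each finite subconfiguration in $M$ yields only arbitrarily large finite grids, and passing to an infinite grid inside $M$ requires either invoking saturation of $\CM$ on an infinite-variable type or running the induction directly in $M$. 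Both points are resolved by one observation you only gesture at: any formula belonging to a non-algebraic type over $M$ defines an infinite set (a finite $M$-definable set has all its solutions in $M$, while the type has a realization outside $M$). Granting this, the array is unnecessary: build $b_n\in\tY(M)$, $c_n\in\tZ(M)$ alternately, where $b_n$ realizes $d^{\Phi^*}_q(y)\wedge\bigwedge_{j<n}\Phi(y,c_j)$ --- a formula in $p$ by criteria (3) and (4), since each $c_j$ was chosen inside $d^\Phi_p$ --- and $c_n$ realizes $d^\Phi_p(z)\wedge\bigwedge_{i\leq n}\Phi(b_i,z)$, a formula in $q$; each such formula has infinitely many solutions in $M$, so the new point can always be chosen distinct from the previous ones, and $I=\{b_i\}$, $J=\{c_j\}$ give $I\times J\subseteq\Phi$.
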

\begin{proof}
Assume that $\Phi$ is cartesian, and let $B\subseteq \tY$, $C\subseteq \tZ$ be infinite sets
with $B\times C \subseteq\Phi$.   By compactness,
there is a non-algebraic type $p(y)\in S_\Phi(M)$ with
$\Phi(y,c)\in p$ for all $c\in C$.  
Hence the set $d^\Phi_p(z)$ is infinite, and we can take $q(z)$ to
be any non-algebraic type containing this formula. 

For the converse, assume that $\models \Phi(p,q)$ holds for some non-algebraic $p,q$. Then we can choose inductively a sequence $(\alpha_i, \beta_i)_{i \in \mathbb{N}}$ in $\mathcal{M}$ such that the following hold for all $i \in \mathbb{N}$:
\begin{enumerate}
	\item $\alpha_i \models d^{\Phi^*}_q(y), \beta_i \models d^\Phi_p(z)$;
	\item $\alpha_i \in \tY \setminus \{ \alpha_j : j < i \}, \beta_i \in \tZ \setminus \{ \beta_j : j < i \}$;
	\item $\models \Phi(\alpha_j,\beta_j)$ for all $j \leq i$.
\end{enumerate} 
\noindent
Indeed, assume $(\alpha_j, \beta_j : j < i)$ satisfying (1)--(3) were already chosen. 
By Fact \ref{fact:stable-sym}(4),  $d^{\Phi^*}_q(y) \in p$, and $\{ \Phi(y, \beta_j) : j <i \}  \subseteq p$ as $\beta_j \models d^{\Phi}_p(z)$ for $j<i$ by the inductive assumption. As $p$ is non-algebraic, the set
$$\{ d^{\Phi^*}_q(y)\} \cup \{ \Phi(y, \beta_j) : j <i \} \cup \{y \neq \alpha_j : j < i \}$$
of formulas with parameters in $M$ is consistent, hence realized in $\mathcal{M}$ by some $\alpha_i \in \tilde{Y}$. Similarly, $d^\Phi_p(z)\in q$ by Fact \ref{fact:stable-sym}(3), and $\{ \Phi(\alpha_j, z) : j \leq i \} \subseteq q$ as $\alpha_j \models d^{\Phi^*}_q(y)$ for all $j \leq i$ by the inductive assumption and choice of $\alpha_i$. As $q$ is non-algebraic,
$$\{ d^{\Phi}_p(z)\} \cup \{ \Phi(\alpha_j, z) : j \leq i \} \cup \{z \neq \beta_j : j < i \}$$
is consistent, and realized by some $\beta_i \in \tZ$. Then $(\alpha_j, \beta_j : j \leq i)$ satisfy (1)--(3) by construction. 

\noindent
Finally, $\{ \alpha_i : i \in \mathbb N \} \times \{\beta_j : j \in \mathbb{N} \} \subseteq \Phi$ witnesses that $\Phi$ is cartesian.

\end{proof}

The following definition is inspired by \cite{raz}.
\begin{defn}\label{def: pop type}
A non-algebraic type $p(y)\in S_\Phi(M)$ is called \emph{popular}
if the  
  set  $\{c\in \tZ \colon \Phi(y;c) \in p(y)\}$ is infinite. 

Similarly, a non-algebraic type $q(z)\in S_{\Phi^*}(M)$ is popular
if the   set  $\{b\in \tY \colon \Phi(b;z) \in q(z)\}$ is infinite. 
\end{defn}

\begin{lem}\label{lem:pop=eq} 
  \begin{enumerate}
  \item \label{item: pop types 1}A non-algebraic type $p(y)\in S_\Phi(M)$ is  popular if 
    and only if there is a non-algebraic type  $q(z)\in
    S_{\Phi^*}(M)$ with $\models\Phi(p,q)$. 
\item A non-algebraic type $q(z)\in S_{\Phi^*}(M)$ is  popular if
    and only if there is a non-algebraic type  $p(y)\in
    S_\Phi(M)$ with $\models\Phi(p,q)$. 
  \end{enumerate}
\end{lem}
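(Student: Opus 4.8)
The plan is to reduce both equivalences to the single observation, supplied by Fact~\ref{fact:stable-sym}, that $\models\Phi(p,q)$ holds precisely when $d^{\Phi}_{p}(z)\in q(z)$, where $d^{\Phi}_{p}(z)$ is the $\Phi^*$-formula defining the set $D_p:=\{c\in\tZ\colon \Phi(y,c)\in p(y)\}$ (this set is $\Phi^*$-definable by Fact~\ref{fact-loc-tab-def}). Under this translation, the condition that $p$ is pop (Definition~\ref{def: pop type}) is exactly the condition that $D_p$ is infinite, so the first part amounts to showing that the definable set $D_p$ is infinite if and only if the formula $d^{\Phi}_{p}(z)$ belongs to some non-algebraic $q\in S_{\Phi^*}(M)$. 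This is the familiar correspondence between infinite definable sets and non-algebraic types, and the only care needed is that we work inside the restricted space of $\Phi^*$-types.

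For the forward direction of part~\ref{item: pop types 1}, I would assume $D_p$ is infinite and produce the desired $q$ by saturation. Passing to a sufficiently saturated elementary extension $\UU\succ\CM$, the partial type $\{d^{\Phi}_{p}(z)\}\cup\{z\neq c\colon c\in M\}$ is finitely satisfiable, since each finite fragment is realized in $M$ by one of the infinitely many elements of $D_p$ avoiding the finitely many named exceptions; hence it has a realization $\gamma\in\tZ(\UU)\setminus M$. Setting $q:=\tp_{\Phi^*}(\gamma/M)$ gives a complete $\Phi^*$-type that is non-algebraic (it has the realization $\gamma\notin M$) and contains $d^{\Phi}_{p}(z)$, so $\models\Phi(p,q)$ by condition~(3) of Fact~\ref{fact:stable-sym}.

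For the reverse direction, I would assume there is a non-algebraic $q$ with $\models\Phi(p,q)$ and show $D_p$ is infinite. By condition~(3) of Fact~\ref{fact:stable-sym} we have $d^{\Phi}_{p}(z)\in q(z)$, and since $q$ is non-algebraic it has a realization $\gamma\notin M$ in some elementary extension $\UU$; this $\gamma$ satisfies $d^{\Phi}_{p}(z)$, witnessing that $d^{\Phi}_{p}(\UU)\not\subseteq M$. Were $D_p=d^{\Phi}_{p}(M)$ finite of size $k$, the sentence asserting that $d^{\Phi}_{p}$ has exactly $k$ solutions would transfer from $M$ to $\UU$ along $\CM\preceq\UU$, forcing $d^{\Phi}_{p}(\UU)=D_p\subseteq M$ and contradicting $\gamma\notin M$. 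Hence $D_p$ is infinite, i.e. $p$ is pop.

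The second part is entirely symmetric: one runs the same argument with the roles of $\Phi$ and $\Phi^*$ exchanged, using the $\Phi$-formula $d^{\Phi^*}_{q}(y)$ and condition~(4) of Fact~\ref{fact:stable-sym} in place of $d^{\Phi}_{p}(z)$ and condition~(3). Since the substantive content has been packaged into Fact~\ref{fact:stable-sym}, I expect no serious obstacle; the only point requiring a moment's attention is the transfer of the cardinality statement along $\CM\preceq\UU$ in the reverse direction, which is precisely what rules out a finite defining set masquerading as an infinite family of types.
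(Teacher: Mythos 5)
Your proof is correct and takes essentially the same approach as the paper: both directions reduce, via condition~(3) of Fact~\ref{fact:stable-sym}, to the observation that $p$ is pop exactly when the $\Phi^*$-definable set $d^\Phi_p(z)$ is infinite, which in turn happens exactly when some non-algebraic $q\in S_{\Phi^*}(M)$ contains that formula. The paper's proof is just a terser version of yours --- it takes ``any non-algebraic type containing this set'' and notes that a non-algebraic $q$ containing $d^\Phi_p(z)$ forces the set to be infinite --- and your saturation argument and the elementary-transfer of cardinality are precisely the details those one-liners abbreviate.
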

\begin{proof} 
   Assume that $p(y)$ is popular, then the  definable set $d^\Phi_p(z)$ is
infinite and we can take $q$ to be any non-algebraic type containing
this set. 

Assume $\models\Phi(p,q)$ for some non-algebraic  $q(z)\in
    S_{\Phi^*}(M)$. Since $q(z)$ contains $d^\Phi_p(z)$, the  set
    defined by $d^\Phi_p(z)$ must be infinite. 
\end{proof}
Combining, we have the following equivalence.

\begin{prop}\label{prop:basic-eq}
The following conditions are equivalent. 
\begin{enumerate}
\item  The relation $\Phi$ is cartesian. 
\item There is a popular type $p(y)\in S_\Phi(M)$.
\item  There is a popular  type $q(z)\in S_{\Phi^*}(M)$. 
\end{enumerate}
\end{prop}

\subsection{Bounds on the number of edges in non-cartesian relations}

The following theorem  is a  generalization
of Theorem 1.3 in \cite{zippel}.
\begin{thm}\label{thm:zippel} 
Let $\CM$ be a sufficiently saturated structure eliminating $\exists^\infty$, and let $\tY$ and $\tZ$ be definable
  sets in $\CM$. Let $\Phi\subseteq
  \tY\times \tZ$ be an $\CM$-definable set such
  that for every $\beta\in\tilde Y$ the
  fiber $\Phi_\beta=\{ z\in \tilde Z \colon (\beta,z)\in \Phi \}$ has
  Morley rank at most $1$. Then the following conditions are equivalent. 
  \begin{enumerate}
\item The relation $\Phi$ is not
cartesian.
\item The relation $\Phi$ is $K_{k,k}$-free for some $k \in \mathbb{N}$. 
\item For all $B\subseteq_n \tY, C\subseteq_n \tZ$ we have 
\[  |\Phi \cap (B\times C)| = O(n^{3/2}).  \]
\end{enumerate}
If, in addition,  $\Phi$ admits cuttings (with some exponent $t \in \mathbb{N}$), then we also have
\begin{enumerate} 
\item[(4)] There is some $\delta>0$ such that  for all 
$B\subseteq_n \tY, C\subseteq_n \tZ$  we have 
\[  |\varPhi \cap (B\times C)| = O(n^{\frac{3}{2}-\delta}).  \]
\end{enumerate}
\end{thm}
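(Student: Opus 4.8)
The plan is to establish the cycle $(1)\Rightarrow(2)\Rightarrow(3)\Rightarrow(1)$, with (4) obtained from the same reduction under the extra hypothesis. Two of these directions are soft. For $(3)\Rightarrow(1)$ I argue by contraposition: if $\Phi$ is cartesian it contains $I\times J$ with $I\subseteq\tY$, $J\subseteq\tZ$ infinite, so for each $n$ there are $B\subseteq_n\tY$, $C\subseteq_n\tZ$ with $B\times C\subseteq\Phi$, giving $|\Phi\cap(B\times C)|=n^2$ and contradicting the $O(n^{3/2})$ bound. For $(1)\Rightarrow(2)$ I use saturation: if $\Phi$ were not $K_{k,k}$-free for any $k$, then the partial type in variables $(y_i)_{i<\omega}$, $(z_j)_{j<\omega}$ asserting $\Phi(y_i,z_j)$ for all $i,j$ together with $y_i\neq y_{i'}$ and $z_j\neq z_{j'}$ would be finitely satisfiable, hence realised, producing infinite $I,J$ with $I\times J\subseteq\Phi$; thus $\Phi$ would be cartesian.

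All the content is in $(2)\Rightarrow(3)$ and its refinement $(2)\Rightarrow(4)$. The strategy is to reduce to a genuinely $K_{s,2}$-free relation, to which I can apply Fact~\ref{fac: KovSosTur}(1) with $t=2$ for (3) and Corollary~\ref{cor: distal incidence bound} for (4). The obstacle is that a non-cartesian $\Phi$ with rank-$1$ fibers need not itself be $K_{s,2}$-free: the correct orientation is to bound common $\tZ$-neighbourhoods of pairs from $\tY$, i.e.\ $|\Phi_{\beta_1}\cap\Phi_{\beta_2}|$, and although two rank-$1$ fibers inside the rank-$2$ set $\tZ$ meet in a finite set for \emph{generic} pairs, special pairs may share an entire rank-$1$ component and so have infinite intersection. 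I would eliminate this degeneracy in two definable steps, both justified by non-cartesianness together with elimination of $\exists^\infty$ and saturation.

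First, by definability of Morley rank and degree there is a uniform bound $e$ on the number of rank-$1$ components of the fibers $\Phi_\beta$; passing to a finite cover of $\tY$ parametrising pairs (component, $\beta$) I may assume each fiber $\Phi_\beta$ is irreducible, while $\tY$ still has Morley rank $2$ and each of the finitely many resulting relations is again non-cartesian (a cartesian piece would make $\Phi$ cartesian). Second, on a relation with irreducible fibers the formula $\Phi_{\beta_1}=\Phi_{\beta_2}$ defines an equivalence relation whose classes are finite: an infinite class would give infinitely many $\beta'$ with a common infinite fiber $D$, whence $\{\beta'\}\times D\subseteq\Phi$ and $\Phi$ cartesian — this is precisely the non-existence of popular types of Proposition~\ref{prop:basic-eq}. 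Quotienting $\tY$ by this finite equivalence relation in $\CM^{\mathrm{eq}}$ yields a relation with irreducible and pairwise distinct fibers, changing all counts by at most the constant factor $e$. Now distinct fibers meet in a finite set, and by elimination of $\exists^\infty$ and compactness this intersection is uniformly bounded by some $s$; equivalently the swapped relation $\Phi^*\subseteq\tZ\times\tY$ is $K_{s,2}$-free.

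Finally I read off the bounds. Applying Fact~\ref{fac: KovSosTur}(1) to $\Phi^*$ with $U=\tZ$, $V=\tY$ and $t=2$ gives $O(n^{1/2}\cdot n+n)=O(n^{3/2})$ edges, so summing over the $e$ components and absorbing the quotient factor proves (3). For (4), the reduced relation is obtained from $\Phi$ by the above definable operations and, in the intended setting where cuttings arise from a distal cell decomposition of the ambient structure, still admits cuttings with some exponent $D$; hence Corollary~\ref{cor: distal incidence bound} applies and yields $O(n^{3/2-\delta})$ for some $\delta>0$, which is again preserved by summing over the finitely many components. I expect the main difficulty to lie in making the first reduction fully definable and uniform — selecting the components of the fibers in a family and checking that the cover and the finite quotient preserve both the rank hypotheses and the admissibility of cuttings — rather than in the combinatorial counting, which is then a direct appeal to the cited incidence bounds.
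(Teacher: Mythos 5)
Your easy implications (the cycle orientation, saturation for $(1)\Rightarrow(2)$, contraposition for $(3)\Rightarrow(1)$) are fine, but the core reduction in $(2)\Rightarrow(3)$ has two genuine gaps. The first and most serious is the opening appeal to ``definability of Morley rank and degree'': under the actual hypotheses (sufficient saturation, elimination of $\exists^\infty$, and the stated rank conditions) neither Morley rank nor Morley degree need be definable in families, so there is no justified uniform bound $e$ on the number of rank-$1$ components of the fibers $\Phi_\beta$, and no definable finite cover of $\tY$ parametrising pairs (component, $\beta$). Elimination of $\exists^\infty$ only makes \emph{finiteness} conditions definable; it says nothing about degree or about picking out individual components definably. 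So your first reduction step is not available in this generality. The second gap is in the quotient step: you quotient by literal equality $\Phi_{\beta_1}=\Phi_{\beta_2}$, but two \emph{distinct} irreducible rank-$1$ fibers can still have infinite intersection (they may differ only by finite sets), so after your quotient it is simply false that ``distinct fibers meet in a finite set'', and the resulting relation need not be $K_{s,2}$-free. What you actually need is the coarser relation ``$\Phi_{\beta_1}\cap\Phi_{\beta_2}$ is infinite'' (generic equality), which is definable by elimination of $\exists^\infty$ and has finite classes by the pop-type argument you cite; but that relation is transitive only when fibers are irreducible, i.e., it depends on exactly the step that fails.

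The paper's proof is designed to avoid any definable global decomposition. For each \emph{fixed} $b$ it uses the pointwise fact that only finitely many rank-$1$ $\Phi^*$-types contain $\Phi_b$ (this is just the definition of rank and degree for the single set $\Phi_b$; no uniformity or definability in families is needed). If some $b$ had infinitely many $b'$ with $\RM(\Phi_b\cap\Phi_{b'})=1$, pigeonhole over these finitely many types produces a pop type, contradicting non-cartesianness via Proposition~\ref{prop:basic-eq}. Then elimination of $\exists^\infty$, applied to the genuinely definable relation $E(b,b')$ saying ``$\Phi_b\cap\Phi_{b'}$ is infinite'', yields a uniform degree bound $r$ for this graph, and for each \emph{finite} $B$ one simply $(r+1)$-colors the finite graph $(B,E)$: on each color class all pairwise intersections are uniformly bounded by some $t$ (elimination of $\exists^\infty$ again), so $\Phi$ restricted to each class times $C$ is $K_{2,t}$-free, Fact~\ref{fac: KovSosTur} (or Corollary~\ref{cor: distal incidence bound} for clause (4)) applies, and one sums over the $r+1$ classes. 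The idea you are missing is that for a counting bound it suffices to decompose each finite set $B$ combinatorially (graph coloring); no definable cover or quotient of $\tY$ is needed. This also repairs your treatment of (4): the hypothesis there is only that $\Phi$ admits cuttings, and cuttings pass trivially to restrictions of $\Phi$ to color classes, whereas your cover/quotient construction would need a separate argument (or the stronger assumption of an ambient distal structure, which is not part of the hypotheses of this theorem) to admit cuttings.
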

\begin{proof}
 (1) implies (3). Assume that $\Phi$ is not cartesian.
 
 Assume first
 that there is some $b \in \tY$ for which there are some pairwise
distinct $\left(b_{i}:i\in\mathbb{N}\right)$ in $\tY$
such that $\RM\left(\Phi_b \cap \Phi_{b_i} \right) \geq1$ (hence $=1$) 
for all $i\in\mathbb{N}$. Then each of these sets contains a complete $\Phi^*$-type
of Morley rank $ 1$. By the definition of Morley rank, there are only finitely
many complete $\Phi^*$-types $q_{1},\ldots,q_{s}\in S_{\Phi^*}\left( M \right)$
with $\Phi_b\in q_i$ and $\RM \left(q_{i}\right) = 1$. But then one of these types must
contain $\Phi_{b_i}$ for infinitely many different $i$, hence
it is a popular type \textemdash{} contradicting the assumption by Proposition \ref{prop:basic-eq}.

Thus there is no $b \in \tY$ as above. Using that $T$ eliminates $\exists^\infty$, there is some $r \in \mathbb{N}$ such that for every $b \in \tY$, there are at most $r$ many $b' \in \tY$ such that $\Phi_b \cap \Phi_{b'}$ is infinite. 

Given a finite set $B \subseteq_n \tY$, consider the graph with the vertex set $B$ and the edge relation $E$ defined by $bEb'\iff \Phi_b \cap \Phi_{b'}$
is infinite. Then the graph $\left(B,E\right)$ has degree at most
$r$ by the previous paragraph, and so it is $r+1$ colorable by a standard result in graph theory. Let $B_{i} \subseteq B$
be the set of vertices corresponding to the $i$th color. Then $B = \bigsqcup_{1\leq i \leq r+1} B_i$ and, by elimination of $\exists^\infty$ again, there is some $s \in \mathbb{N}$ depending only of $\Phi$  such that $|\Phi_b \cap \Phi_{b'}| \leq s$
for any $b,b'\in B_{i}$ and $1\leq i\leq r+1$.

Now, given a finite set $C \subseteq_n \tZ$, we have that $\Phi \restriction (B_i \times C)$ is $K_{2,s}$-free for each $1 \leq i \leq r+1$. Then, using Fact \ref{fac: Kovari} with $d=2$, we have
 $$\left|\Phi \cap (B \times C) \right| \leq \sum_{i=1}^{r+1} \left|\Phi \cap (B_{i} \times C) \right|\leq (r+1)cn^{\frac{3}{2}}.$$
Hence taking $c' := (r+1)c$ depending only on $\Phi$ does the job. 
When $\Phi$ admits cuttings, we use Corollary \ref{cor: distal incidence bound} instead of Fact \ref{fac: Kovari}.

Finally, (3) implies (2) and (2) implies (1) are straightforward.
\end{proof}

\begin{rem}
Theorem \ref{thm:zippel}(4) is the only place where the assumption of the existence of a distal expansion is used. It is necessary to get a bound strictly less than $n^{\frac{3}{2}}$, as the points-lines incidence relation on the plane in an algebraically closed field of characteristic $p$ (a strongly minimal structure) demonstrates. However, this $\delta >0$ improvement is crucial to obtain the $2-\varepsilon$ bound on the power in Theorem \ref{thm:main0}(a).
\end{rem}

\section{Reducing Main Theorem to a dichotomy for binary relations}
\label{sec:main-theorem}

To prove the main theorem we 
introduce some notions and make some reductions. 
Since we are only interested in
definable subsets of products of strongly minimal sets, we may and
will assume that $\CM$ has finite Morley rank and eliminates the
quantifier $\exists^\infty $. In particular, Morley rank is \emph{additive} --- this will be used freely throughout the proof.

\begin{ass}  
For the rest of this section  (Section~\ref{sec:main-theorem}) we assume that 
 $\CM$ is a sufficiently saturated structure of finite Morley rank 
that eliminates the
quantifier $\exists^\infty$.

We fix  strongly minimal sets $X,Y,Z$  definable in $\CM$.

We also fix an $M$-definable set $F \subseteq X\times Y\times Z$  of Morley rank $2$.  

We assume that $X,Y,Z$ and $F$ are definable over the empty set (naming some constants if necessary). 
\end{ass}

Notice that writing $F$ as a  union  $F=\cup_{i=1}^k F_i$ and
applying Theorem~\ref{thm:main0} to each $F_i$, it is sufficient to
consider only the case 
when $F$ has Morley degree $1$. 

\medskip
\begin{ass}  In addition, for the rest of Section~\ref{sec:main-theorem}, we assume that $F$ has
  Morley degree 1.
\end{ass}

\subsection{Fiber-algebraic relations}
\label{sec:everywh-non-degen}\leavevmode

\begin{defn}\label{dfn:edg}
Let $S_1,S_2,S_3$ be sets and $E \subseteq S_1\times S_2 \times S_3$ be
a subset. We say that  $E$ is \emph{fiber-algebraic}  if
there is some $d\in \NN$ such that for
$\{i,j,k\}=\{1,2,3\}$ we have 
\[ \models \forall y_i \forall y_j \exists^{ \leq d} y_k E(y_1,y_2,y_3). \]  
\end{defn}

Assume, in addition to Assumptions 1 and 2, that $F\subseteq X\times
Y\times Z$ is not  cylindrical.
Then, since $Z$ is strongly minimal,  the set $\{ (a,b) \in
X\times Y \colon  \exists^{\infty}z  F(a,b,z)\}$ is finite (otherwise we can find infinite sequences $a_i \in X, b_i \in Y$ such that  $F(a_i,b_i,Z)$ is cofinite, hence $\bigcap_{i\in \mathbb{N}} F(a_i,b_i,Z)$ is infinite using saturation of $\CM$ --- so $F$ is cylindrical).  Thus
there are  co-finite  $X_0 \subseteq X$, $Y_0\subseteq Y$ such  that 
\[ \models \forall x{\in} X_0 \,\forall y{\in} Y_0\, \exists^{<\infty}z  F(x,y,z).\]

Applying the same argument for every partition of the coordinates of $F$ we conclude that if $F$ is not cylindrical then there are co-finite $X_0\subseteq X$,
$Y_0\subseteq Y$,  $Z_0 \subseteq Z$ such that the restriction of $F$
to $X_0\times Y_0 \times Z_0$ is fiber-algebraic.

It is not hard to see that if the relation $F \restriction_{X_0 \times Y_0 \times Z_0} \subseteq X_0 \times Y_0 \times Z_0$ satisfies one of the clauses $(a)$ or $(b)$ in Theorem~\ref{thm:main0}, then $F$ satisfies the same clause.
 For example, assume $F$ restricted to $X_0 \times Y_0 \times Z_0$ satisfies clause $(a)$. Let $X_1 := X \setminus X_0, Y_1 := Y \setminus Y_0, Z_1 := Z \setminus Z_0$, we have 
$$F = \bigsqcup_{i,j,k \in \{0,1\}} F \cap (X_i \times Y_j \times Z_k).$$
Then to show that $|F \cap A \times B \times C| = O(n^{2-\varepsilon})$ holds for all $A \subseteq_n X, B \subseteq_n Y, C \subseteq_n Z$, it is enough to establish the same bound for all $A \subseteq_n X_i, B \subseteq_n Y_j, C \subseteq_n Z_k$ for each choice of $i,j,k \in \{0,1\}$ separately. The case $(i,j,k) = (0,0,0)$ holds by assumption. If at least two of $i,j,k$ are $1$,  i.e.~ at least two of the sets $X_i, Y_j, Z_k$ are finite, then the bound $O(n)$ obviously holds. So we only need to consider the case when only one of $i,j,k$ is $1$, for example $F \cap X_1 \times Y_0 \times Z_0$. For any fixed $x \in X_1$ and $y \in Y_0$ there are only finitely many $z \in Z_0$ with $(x,y,z) \in F$, since $\{(a,b) \in X \times Y : \exists^\infty z F(a,b,z) \} \subseteq X_1 \times Y_1$. Hence again the bound $O(n)$ holds for $F \cap \left( X_1 \times Y_0 \times Z_0 \right)$ by elimination of $\exists^\infty$.

In view of this observation, Theorem~\ref{thm:main0} follows from
the following theorem.

\begin{thm}\label{thm:main1}
Assume, in addition to Assumptions 1 and 2, that 
  $F$ is fiber-algebraic and also that $\CM$ is interpretable in a distal
structure. Then one of the following holds. 
\begin{enumerate}[(a)]
\item  There is $\varepsilon>0$ such that for all $A\subseteq_n X ,B
  \subseteq_n Y ,C \subseteq_n Z$
  we have 
\[|F\cap A\times B \times C | = O(n^{2-\varepsilon}). \] 
\item  $F$ is group-like.
\end{enumerate}
\end{thm}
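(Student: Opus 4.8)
# Proof Proposal for Theorem \ref{thm:main1}

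<br>

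The plan is to prove the dichotomy by assuming that clause (a) fails and deriving that $F$ is group-like, via Hrushovski's group configuration theorem. The strategy is to extract, from the failure of the power-saving bound, a stable group configuration built out of generic data associated to $F$, and then to recognize the resulting definable group as the abelian rank-$1$ group witnessing group-likeness.

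<br>

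\textbf{Step 1: Set up the relevant binary relations and reduce to a cartesian dichotomy.} Working with a generic $\bar a=(a_1,a_2,a_3)\in F$, I would use the $\Delta$-algebraicity of $F$ to define, for generic pairs of coordinates, binary relations on products of the strongly minimal sets. Concretely, I expect to consider relations such as $\Phi$ on $(X\times Y)\times(\text{something of rank }2)$ encoding the incidence structure of $F$, arranged so that $\Phi$ has rank-$1$ fibers and lives between definable sets $\tY,\tZ$ of Morley rank $2$ and degree $1$, exactly as in the hypotheses of Theorem~\ref{thm:zippel}. Since $\CM$ is interpretable in a distal structure, every such $\Phi$ is interpretable in a distal structure and hence admits cuttings with some exponent $D$ by the cited Fact; this is what activates clause (4) of Theorem~\ref{thm:zippel}.

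<br>

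\textbf{Step 2: Use the combinatorial bound to force a cartesian relation.} If $F$ does not satisfy the $O(n^{2-\varepsilon})$ bound of clause (a), then I would argue that the associated binary relation $\Phi$ must violate the $O(n^{3/2-\delta})$ bound of Theorem~\ref{thm:zippel}(4). By the equivalence in that theorem (specifically, the contrapositive of ``(1) implies (3)/(4)''), $\Phi$ is therefore cartesian. By Proposition~\ref{prop:basic-eq}, this yields a \emph{pop} type $p(y)\in S_\Phi(M)$, and by Lemma~\ref{lem:pop=eq} a matching non-algebraic $q(z)\in S_{\Phi^*}(M)$ with $\models\Phi(p,q)$. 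The content here is that a large incidence count between finite sets, which cannot be accounted for by a power-saving bound, must be explained by a positive-dimensional family of coincidences — precisely the cartesian/populated-type phenomenon isolated in Section~\ref{sec:cart-relat-popul}.

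<br>

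\textbf{Step 3: Build the group configuration and apply Hrushovski's theorem.} The cartesian structure extracted above should be leveraged to produce a \emph{group configuration} in the sense of stable group theory: a configuration of six points (three ``vertices'' and three ``lines'', or the standard hexagon) realizing the prescribed pattern of algebraic dependencies and genericity, with the rank-$2$ relation $F$ providing the incidences. Here I would exploit that $\CM$ has finite Morley rank and that the generic element $\bar\alpha\in F$ supplies inter-algebraic coordinates. Applying Hrushovski's group configuration theorem in the stable (finite Morley rank) setting yields a definable group $G$; a rank computation from $\RM(F)=2$ together with strong minimality of $X,Y,Z$ forces $\RM(G)=1$ and degree $1$, hence $G$ is abelian. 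Tracking the inter-algebraicity through the configuration recovers elements $g_1,g_2,g_3\in G$ and $\alpha_i$ with $\alpha_i$ inter-algebraic with $g_i$ over the base and $g_1\cdot g_2\cdot g_3=1$, which is exactly Definition~\ref{def: group-like}.

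<br>

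\textbf{Main obstacle.} I expect the hard part to be Step 3: correctly assembling the cartesian/pop-type data into a genuine group configuration with the right independence and inter-algebraicity relations, and verifying the rank bookkeeping so that the group configuration theorem applies and produces a group of rank and degree exactly $1$. The combinatorial-to-model-theoretic passage in Step 2 is essentially packaged by Theorem~\ref{thm:zippel}, but ensuring that the failure of the three-dimensional bound (a) genuinely yields the failure of the two-dimensional bound for the \emph{correct} auxiliary $\Phi$ — rather than some degenerate or cylindrical configuration already excluded by $\Delta$-algebraicity — will require care.
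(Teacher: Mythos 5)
Your high-level skeleton (pass to a binary relation between rank-$2$ sets, apply Theorem~\ref{thm:zippel}, then invoke the group configuration theorem) does match the paper's architecture, but there are two genuine gaps, one of which makes the argument fail as stated. First, the auxiliary relation is not the one you propose. A relation ``on $(X\times Y)\times(\text{something of rank }2)$ encoding the incidence structure of $F$'' cannot work: by $\Delta$-algebraicity every fiber of $F$ over a pair of coordinates is finite (size $\leq d$), so such an incidence relation is never cartesian and the dichotomy is vacuous for it. The paper instead forms the fiber-power over $X$, namely $G\subseteq Y^2\times Z^2$ given by $G(y,y',z,z')\iff \exists x\,\bigl(F(x,y,z)\wedge F(x,y',z')\bigr)$, which has Morley rank $3$ with rank-$\leq 1$ fibers, and transfers counting via the Cauchy--Schwarz bound $|F\cap(A\times B\times C)|\leq d\,|A|^{1/2}|G\cap(B^2\times C^2)|^{1/2}$ (Proposition~\ref{prop:bound-1}); this quantifying-out of $x$ is the essential Elekes--Szab\'o move and it is absent from your Step 1.

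Second, and more seriously, your Step 2--3 implication ``failure of (a) $\Rightarrow$ $\Phi$ cartesian $\Rightarrow$ pop type $\Rightarrow$ group configuration'' is not valid: the mere existence of a pop type does \emph{not} yield group-likeness. Pop types for $G$ have Morley rank $1$, so if there are only finitely many of them one throws away finitely many rank-$1$ definable sets, the restriction of $G$ becomes non-cartesian, and one lands in clause (a) --- no group appears. What the failure of (a) actually gives (contrapositive of Proposition~\ref{prop:three-two}) is that $G$ remains cartesian after removing \emph{any} rank-$\leq 1$ definable sets, and this robustness forces \emph{infinitely many} pop types. That infinitude is what produces the indispensable sixth point of the configuration: some pop type $p$ has canonical parameter $[p]\notin\acl(\emptyset)$, and for $q$ with $\Phi(p,q)$ and independent realizations $\beta\models p$, $\gamma\models q$ one gets $[p]\in\acl(\gamma)$, $[q]\in\acl(\beta)$, and $[p],[q]$ inter-algebraic (Claim~\ref{lem:fin}), so $t:=[p]$ lies in $\acl(\beta)\cap\acl(\gamma)\setminus\acl(\emptyset)$. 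This element $t$ is the center of the $\acl$-diagram in Proposition~\ref{prop:w-case-b} (the other five points being $\beta_1,\beta_2,\gamma_1,\gamma_2$ and the witness $\alpha\in X$ supplied by Lemma~\ref{lem:almost-g}), and only with it can Hrushovski's theorem be applied. Your proposal correctly flags Step 3 as the hard part, but the missing idea is precisely this: the extra algebraic point of the configuration is the canonical parameter of a pop type, and the finitely-many/infinitely-many pop types dichotomy --- not cartesianness per se --- is what separates clause (a) from clause (b).
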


\begin{ass} For the rest of Section~\ref{sec:main-theorem}  we assume in
  addition that the relation 
 $F$ is fiber-algebraic and  
$d\in \NN$ is as in Definition~\ref{dfn:edg}.
\end{ass}

\subsection{On acl-diagrams}
\label{sec:acl-diagrams}

We say that three elements $p_1,p_2,p_3$ of $M$  form an
\emph{$\acl$-triangle} if  $\RM(p_i/\emptyset)=1$ for $i\in[3]$,
$\RM(p_1p_2p_3/\emptyset)=2$,  and
for all $\{i,j,k\}=\{1,2,3\}$ we have $p_i\in \acl(p_jp_k)$ (hence also $p_i \ind p_j$ for all $i\neq j \in \{1,2,3\})$.

Since $F$ is fiber-algebraic  of Morley rank $2$, we have the following
claim.

\begin{claim}\label{claim:2-ind} Let $(a,b,c)$  be
  generic in $F$, i.e.~$(a,b,c)\in F$ and $\RM(abc/\emptyset)=2$. 
Then $a,b,c$ form an $\acl$-triangle.

In particular, $b$ and $c$ are independent generics
  in $Y$ and $Z$, respectively; and, by stationarity (as $Y\times Z$ has Morley degree $1$),  for any independent generics
  $b'\in Y$, $c'\in Z$ there is some $x\in X$ such that
  $(x,b',c') \in F$ and $(x,b',c')$ is generic in $F$).

  Similarly,  for any independent generics
  $a'\in X$, $b'\in Y$ we have
  $\CM\models \exists z \, F(a',b',z)$.
\end{claim}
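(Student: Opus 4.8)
The plan is to read off the $\acl$-triangle conditions directly from $\Delta$-algebraicity combined with additivity of Morley rank (available because $\CM$ has finite Morley rank, via the Lascar inequalities), and then to derive the two surjectivity clauses from stationarity of the generic type. First I would use that $F$ is $\Delta$-algebraic: fixing any two of the three coordinates leaves at most $d$ witnesses for the third, so from $(a,b,c)\in F$ one immediately gets $a\in\acl(bc)$, $b\in\acl(ac)$ and $c\in\acl(ab)$. Since adjoining an algebraic element does not change Morley rank, this yields $\RM(ab/\emptyset)=\RM(ac/\emptyset)=\RM(bc/\emptyset)=\RM(abc/\emptyset)=2$.

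Next I would pin down the individual ranks. As $X,Y,Z$ are strongly minimal, each of $a,b,c$ has rank $0$ or $1$ over $\emptyset$. Applying additivity to the pair $(a,b)$ gives $\RM(ab/\emptyset)=\RM(a/\emptyset)+\RM(b/a)$; since both summands are at most $1$ while the left side is $2$, we are forced to have $\RM(a/\emptyset)=1$ and $\RM(b/a)=1=\RM(b/\emptyset)$, the latter equality giving $a\ind b$. Running the same computation on the pairs $(a,c)$ and $(b,c)$ shows $\RM(a/\emptyset)=\RM(b/\emptyset)=\RM(c/\emptyset)=1$ and that $a,b,c$ are pairwise independent. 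Together with the $\acl$-relations from the previous step and the hypothesis $\RM(abc/\emptyset)=2$, this is exactly the assertion that $a,b,c$ form an $\acl$-triangle; in particular $b$ and $c$ are independent generics of $Y$ and $Z$.

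For the two ``in particular'' clauses I would invoke stationarity. Because $Y$ and $Z$ are strongly minimal of degree $1$, the product $Y\times Z$ has Morley rank $2$ and degree $1$, so there is a unique type over $\emptyset$ of a pair of independent generics. Hence any independent generics $b'\in Y$, $c'\in Z$ satisfy $\tp(b'c'/\emptyset)=\tp(bc/\emptyset)$, and by saturation of $\CM$ there is an automorphism $\sigma$ fixing $\emptyset$ with $\sigma(b)=b'$ and $\sigma(c)=c'$. Applying $\sigma$ to $(a,b,c)\in F$ and using that $F$ is $\emptyset$-definable produces $x:=\sigma(a)$ with $(x,b',c')\in F$ and $\RM(xb'c'/\emptyset)=\RM(abc/\emptyset)=2$, i.e. $(x,b',c')$ is generic in $F$. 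The last statement is the same argument with coordinates permuted: $X\times Y$ also has degree $1$, so an automorphism fixing $\emptyset$ sends the independent generic pair $(a,b)$ to any prescribed $(a',b')$, and it carries $c$ to a witness $z$ of $\exists z\,F(a',b',z)$.

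The only real subtlety is the use of additivity of Morley rank. The hard part will be to justify that for finite ranks the Lascar inequalities force equality in $\RM(ab/\emptyset)=\RM(a/\emptyset)+\RM(b/a)$ (the natural sum coinciding with the ordinary sum for finite ordinals), which is precisely where the finite-Morley-rank hypothesis on $\CM$ enters; everything else is a routine application of strong minimality, $\Delta$-algebraicity, and homogeneity.
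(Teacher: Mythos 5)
Your proof is correct and follows exactly the route the paper intends: the paper states Claim \ref{claim:2-ind} without proof, asserting it is immediate from $\Delta$-algebraicity and $\RM(F)=2$, and your writeup fills in precisely those details ($\Delta$-algebraicity gives the $\acl$-relations, additivity of Morley rank pins down the individual ranks and pairwise independence, and stationarity of the degree-$1$ products plus homogeneity of the saturated model gives the two ``in particular'' clauses). Your flagged subtlety about additivity is fine here, since for tuples lying in products of strongly minimal sets Morley rank coincides with $U$-rank and the Lascar inequalities collapse to equality for finite ranks --- a fact the paper itself uses freely under the name ``additivity of Morley rank.''
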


\medskip
In this paper we will consider some simple diagrams, where by 
a \emph{diagram} we mean a collection of elements of $M$   and lines between
them (subsets of the given elements). 

\begin{defn}
We say that a given diagram is an \emph{$\acl$-diagram} if 
\begin{enumerate}
\item $\RM(p/\emptyset)=1$ for every  point $p$ in the diagram;
\item  Every three collinear points form an $\acl$-triangle;
\item $\RM(p q r/\emptyset)=3$ for any three  non-colinear
  points $p,q,r$.
\end{enumerate}
\end{defn}

\subsection{The $4$-ary relation $Q$}
\label{sec:relation-w}

Our next goal is to restate Theorem~\ref{thm:main1} in term of a $4$-ary
relation $Q$.  (We continue to use  Assumptions 1--3.)
\medskip

Let $Q\subseteq Y^2\times Z^2$ be the
definable relation
\begin{equation*}
   Q=\{ (y,y',z,z')\in  Y^2\times Z^2  \colon \exists\,x{\in} X
  \left((x,y,z){\in F} \,\&\,
  (x,y',z'){\in} F \right) \}.
\end{equation*}

We first observe some basic properties of $Q$.

\begin{claim}\label{claim:d-bound}  For $(b_1,b_2)\in Y^2$ and
  $c_1\in Z$   the set
\[ \{ z\in Z \colon (b_1,b_2, c_1,z) \in Q \} \]
has size at most $d^2$.   

Similarly,  for $(c_1,c_2)\in Z^2$ and
  $b_1\in Y$   the set
\[ \{ y\in Y \colon (b_1, y , c_1,c_2) \in Q \} \]
has size at most $d^2$.   
\end{claim}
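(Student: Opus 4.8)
The plan is to prove both bounds by simply unwinding the definition of $G$ and applying the $\Delta$-algebraicity of $F$ (Definition~\ref{dfn:edg}) twice, once for each occurrence of $F$ in the existential defining $G$. Fix the parameter $d\in\NN$ witnessing that $F$ is $\Delta$-algebraic; recall this means that fixing any two of the three coordinates bounds the number of admissible values of the remaining coordinate by $d$.

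For the first statement, fix $(b_1,b_2)\in Y^2$ and $c_1\in Z$, and consider $z\in Z$ with $(b_1,b_2,c_1,z)\in G$. By definition of $G$ there is some $x\in X$ with $(x,b_1,c_1)\in F$ and $(x,b_2,z)\in F$. First I would bound the number of possible witnesses $x$: since $b_1\in Y$ and $c_1\in Z$ are fixed, $\Delta$-algebraicity (with the $X$-coordinate free) gives at most $d$ many $x\in X$ satisfying $(x,b_1,c_1)\in F$; call them $x_1,\dots,x_m$ with $m\le d$. Next, for each fixed $x_\ell$, since $x_\ell\in X$ and $b_2\in Y$ are fixed, $\Delta$-algebraicity (with the $Z$-coordinate free) gives at most $d$ many $z\in Z$ with $(x_\ell,b_2,z)\in F$. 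Every $z$ in our set arises from one of these $m\le d$ witnesses, so the total count is at most $m\cdot d\le d^2$, as required.

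The second statement is proved by the identical two-step counting, only changing which coordinate is left free at the second application. Fixing $b_1\in Y$ and $(c_1,c_2)\in Z^2$, any $y$ with $(b_1,y,c_1,c_2)\in G$ comes with a witness $x$ satisfying $(x,b_1,c_1)\in F$ and $(x,y,c_2)\in F$. As before there are at most $d$ such witnesses $x$ (fixing the $Y$- and $Z$-coordinates $b_1,c_1$), and for each such $x$ there are at most $d$ many $y\in Y$ with $(x,y,c_2)\in F$ (now fixing the $X$- and $Z$-coordinates $x,c_2$). Multiplying again yields the bound $d^2$. I do not anticipate any genuine obstacle here: the only thing to keep straight is bookkeeping, namely verifying that in each of the two applications of Definition~\ref{dfn:edg} we are indeed fixing two coordinates of $F$ and freeing the third, which the shape of $G$ guarantees.
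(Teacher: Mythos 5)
Your proof is correct and is essentially identical to the paper's argument: fix the parameters, bound the witnesses $x$ by $d$ via $\Delta$-algebraicity, then for each such $x$ bound the remaining free coordinate by $d$, giving $d^2$. The only difference is cosmetic --- the paper proves the first statement and dismisses the second with ``similarly,'' whereas you write out both symmetric cases.
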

\begin{proof}
Let $b_1, b_2, c_1$ be fixed. As $F$ is 
fiber-algebraic, by the choice of $d$ there are at most $d$ elements $x \in X$ such that $(x,b_1,c_1) \in F$, and for each such $x$, there are at most $d$ elements $z \in Z$ such that $(x,b_2,z) \in F$. Hence, by definition of $Q$, there are at most $d^2$ elements $z \in Z$ such that $(b_1,b_2,c_1,z) \in Q$.    
\end{proof}

\begin{cor}\label{cor:dim-fiber-i-one}
For any $\bar b=(b_1,b_2)\in Y^2$ the Morley rank of the fiber
$Q_{\bar b}=\{ \bar z\in Z^2 \colon (\bar b,\bar z)\in Q\}$ is at
most $1$.

Similarly,
for any $\bar c=(c_1,c_2)\in Z^2$  the Morley rank of the fiber
$Q_{\bar c}=\{ \bar y\in Y^2 \colon (\bar y,\bar c)\in Q\}$ is at
most $1$.

\end{cor}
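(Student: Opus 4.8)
The plan is to deduce Corollary~\ref{cor:dim-fiber-i-one} directly from Claim~\ref{claim:d-bound}, exploiting the fact that $Z$ (and hence $Z^2$) is built from strongly minimal sets, so that Morley rank in $Z^2$ is controlled by the projections onto the two coordinates. Fix $\bar b=(b_1,b_2)\in Y^2$ and consider the fiber $G_{\bar b}=\{(z,z')\in Z^2:(b_1,b_2,z,z')\in G\}$. The idea is to stratify $G_{\bar b}$ according to its first coordinate: for each $z\in Z$ the claim tells us exactly that the ``vertical'' slice $\{z':(b_1,b_2,z,z')\in G\}$ is finite (of size at most $d^2$). Thus $G_{\bar b}$ is a finite-to-one image over its projection to the first copy of $Z$, and I expect this to force $\RM(G_{\bar b})\leq 1$.

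To make this precise I would argue as follows. First I would note that $\RM(Z^2)=2$ since $Z$ is strongly minimal, so $\RM(G_{\bar b})\in\{0,1,2\}$ (or $G_{\bar b}=\emptyset$, in which case $\RM=-\infty$ and there is nothing to prove). The goal is to rule out $\RM(G_{\bar b})=2$. Suppose toward a contradiction that $\RM(G_{\bar b})=2$. Working in a sufficiently saturated model, pick $(z,z')\in G_{\bar b}$ generic over $\bar b$, so that $\RM(zz'/\bar b)=2$. Since $Z$ is strongly minimal this means $z$ and $z'$ are independent generics of $Z$ over $\bar b$; in particular $\RM(z'/\bar b z)=1$, so $z'\notin\acl(\bar b z)$. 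But Claim~\ref{claim:d-bound} asserts that $\{z'':( b_1,b_2,z,z'')\in G\}$ has size at most $d^2$, i.e. is finite, so every element of this set — including $z'$ — lies in $\acl(b_1b_2 z)=\acl(\bar b z)$. This contradicts $z'\notin\acl(\bar b z)$, and hence $\RM(G_{\bar b})\leq 1$, as required. The symmetric statement for $G^{\bar c}$ follows by the identical argument using the second half of Claim~\ref{claim:d-bound}, exchanging the roles of $Y$ and $Z$.

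The only genuinely delicate point — and the step I would present most carefully — is the translation ``finite fiber of a definable set implies $\RM\leq 1$,'' i.e. the passage from the bounded size of the vertical slices to the bound on the rank of $G_{\bar b}$. This is really the additivity/definability of Morley rank for definable families, and it is clean here precisely because $\CM$ eliminates $\exists^\infty$ (so finiteness of each slice is uniform, bounded by $d^2$) and because the ambient set $Z^2$ has the simple rank-two structure coming from strong minimality. I would phrase the contradiction in terms of genericity and algebraic closure as above, since that is the most transparent way to invoke strong minimality: a generic pair in a rank-$2$ subset of $Z^2$ must have its second coordinate generic over the first, which is incompatible with the second coordinate being algebraic over the first. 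No separate lemma on rank additivity is then strictly needed, which keeps the argument self-contained relative to the facts already available in the excerpt.
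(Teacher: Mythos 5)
Your proof is correct and takes essentially the same route as the paper: the corollary is stated there as an immediate consequence of Claim~\ref{claim:d-bound}, with the rank bound following because the fiber $G_{\bar b}$ has slices of size at most $d^2$ over its first $Z$-coordinate, hence rank at most $\RM(Z)=1$ by additivity of Morley rank. Your genericity/$\acl$ argument (a generic pair of rank $2$ would need its second coordinate non-algebraic over the first, contradicting the uniform bound $d^2$) is simply a hands-on proof of that additivity instance, so the key input and the structure of the argument coincide with the paper's.
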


\begin{cor}\label{cor:w-mr3}
  The Morley rank of $Q$ is $3$. 
\end{cor}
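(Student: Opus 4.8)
The plan is to compute $\RM(G)$ by viewing $G$ as living over the fibration $\bar b \mapsto G_{\bar b}$ and applying the additivity of Morley rank for the projection $G \to Y^2$. First I would note that the projection map $\pi\colon G \to Y^2$, $(\bar y, \bar z) \mapsto \bar y$, has fibers $G_{\bar b}$ of Morley rank at most $1$ by Corollary~\ref{cor:dim-fiber-i-one}. Since $\RM(Y^2) = 2$ (as $Y$ is strongly minimal) and $\RM(G_{\bar b}) \leq 1$ for every $\bar b$, the standard inequality gives $\RM(G) \leq \RM(Y^2) + \max_{\bar b} \RM(G_{\bar b}) \leq 2 + 1 = 3$.

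For the lower bound I would exhibit a point of $G$ of Morley rank $3$ by a genericity/dimension count. Choose $b_1, b_2 \in Y$ and $c_1 \in Z$ mutually independent and generic, so that $\RM(b_1 b_2 c_1/\emptyset) = 3$. By Claim~\ref{claim:2-ind}, since $b_1$ and $c_1$ are independent generics, there is some $x \in X$ with $(x, b_1, c_1) \in F$ generic in $F$; and then applying $\Delta$-algebraicity together with the genericity of the remaining coordinates, one finds $z \in Z$ with $(x, b_2, z) \in F$. This produces a tuple $(b_1, b_2, c_1, z) \in G$. The point is to check that such a tuple can be chosen with $\RM(b_1 b_2 c_1 z/\emptyset) = 3$: the first three coordinates already contribute rank $3$, and $z$ is algebraic over $x b_2$, hence over $\acl(b_1 b_2 c_1)$, so it adds nothing to the rank, giving $\RM(b_1 b_2 c_1 z/\emptyset) = 3$ and therefore $\RM(G) \geq 3$.

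Combining the two inequalities yields $\RM(G) = 3$.

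The main obstacle I anticipate is the lower bound, specifically verifying that the witnessing tuple genuinely has Morley rank $3$ rather than collapsing. The subtlety is to ensure that $z$ lies in the algebraic closure of $b_1 b_2 c_1$ (via $x$), so that adjoining $z$ does not drop the configuration into a lower-rank locus, and simultaneously that $(b_1, b_2, c_1)$ can be taken fully independent of rank $3$. This is where Claim~\ref{claim:2-ind} and the $\Delta$-algebraic hypothesis do the real work: the former guarantees existence of a generic $x$ over independent $b_1, c_1$, and the latter guarantees the fibers over $x$ are finite, forcing $z \in \acl(x b_2) \subseteq \acl(b_1 b_2 c_1)$.
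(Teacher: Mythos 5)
Your argument is correct and is essentially the paper's own proof: the upper bound via additivity of Morley rank together with Corollary~\ref{cor:dim-fiber-i-one}, and the lower bound by producing a point of $G$ above independent generics $b_1,b_2,c_1$ of rank $3$ using Claim~\ref{claim:2-ind}. The one imprecision is your attribution of the existence of $z$ with $(x,b_2,z)\in F$ to $\Delta$-algebraicity, which only bounds the fiber (giving $z\in\acl(xb_2)$); existence instead requires a second application of Claim~\ref{claim:2-ind} to the pair $(x,b_2)$, which is legitimate because $x\in\acl(b_1c_1)$ and $b_1c_1\ind b_2$ imply $x\ind b_2$ --- exactly the step the paper makes explicit.
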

\begin{proof} It follows from the additivity of Morley rank and Corollary \ref{cor:dim-fiber-i-one} that $\RM(Q)\leq \RM(Y^2) + 1 = 3$.

On the other hand, let $b_1,b_2\in Y, c_1\in Z$ be
independent generics, hence $\RM(b_1b_2c_1/\emptyset)=3$.  
By Claim~\ref{claim:2-ind}, there is a generic $a\in
X$ with $\CM\models F(a,b_1,c_1)$. Applying
Claim~\ref{claim:2-ind} to $a$ and $b_2$ (as $a \in \acl(b_1 c_1)$ and $b_1 c_1 \ind b_2$, we have $a\ind b_2$)  we get
$c_2\in Z$ with $\CM\models F(a,b_2,c_2)$.
Since $\CM\models Q(b_1,b_2,c_1,c
_2)$ we have
$\RM(Q)\geq 3$.

\end{proof}

\begin{lem}\label{lem:almost-g} 
Let 
$(b_1,b_2,c_1,c_2)$ be generic in $Q$. Then there is
$a\in X$ such that  the diagram 
\[
\begin{tikzpicture}[scale=0.9]
\draw[black,line  width=0.2mm] (0,0) -- (0,2); 
\draw[black,line  width=0.2mm] (0,0) -- (2,0); 
\node at (0,0) {$\bullet$};
\node[below,scale=1.0] at (0,0) {$a$}; 
\node at (1,0) {$\bullet$}; 
\node[below,scale=1.0] at (1,0) {$b_1$}; 
\node at (2,0) {$\bullet$}; 
\node[below,scale=1.0] at (2,0) {$c_1$}; 
\node at (0,1) {$\bullet$}; 
\node[left,scale=1.0] at (0,1) {$b_2$}; 
\node at (0,2) {$\bullet$}; 
\node[left,scale=1.0] at (0,2) {$c_2$}; 
\end{tikzpicture}
\]
is an $\acl$-diagram with $\CM\models F(a,b_i,c_i)$. 
\end{lem}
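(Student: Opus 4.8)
The plan is to first produce the vertex $a$, and then verify the three defining clauses of an $\acl$-diagram through a short sequence of Morley rank computations, all driven by the inter-algebraicities coming from $\Delta$-algebraicity together with Claim~\ref{claim:2-ind}.

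\emph{Producing $a$.} Since $(b_1,b_2,c_1,c_2)$ is generic in $G$, in particular it lies in $G$, so by the definition of $G$ there is some $a\in X$ with $\models F(a,b_1,c_1)$ and $\models F(a,b_2,c_2)$; I fix such an $a$. Because $F$ is $\Delta$-algebraic, this single $a$ satisfies $a\in\acl(b_1c_1)$, $a\in\acl(b_2c_2)$, $c_1\in\acl(ab_1)$ and $c_2\in\acl(ab_2)$ (at most $d$ choices in each case). These four membership facts are the only inputs I will use about $a$.

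\emph{Clauses (1) and (2): the two collinear triples are $\acl$-triangles.} I first claim $\RM(b_1c_1)=2$. The inequality $\le 2$ is clear; for $\ge 2$, suppose $\RM(b_1c_1)\le 1$. Using $a\in\acl(b_1c_1)$, $c_2\in\acl(ab_2)$, and $\RM(b_2/\,b_1c_1a)\le 1$, additivity of Morley rank gives $\RM(b_1b_2c_1c_2)=\RM(b_1c_1ab_2)\le 2$, contradicting $\RM(G)=3$ from Corollary~\ref{cor:w-mr3}. Since the defining formula of $G$ is symmetric under exchanging $(y,z)$ with $(y',z')$, the tuple $(b_2,b_1,c_2,c_1)$ is again generic in $G$ with the same witness $a$, so the identical argument yields $\RM(b_2c_2)=2$. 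Now $a\in\acl(b_1c_1)$ forces $\RM(ab_1c_1)=\RM(b_1c_1)=2=\RM(F)$, so $(a,b_1,c_1)$ is generic in $F$; by Claim~\ref{claim:2-ind} it is an $\acl$-triangle, and symmetrically so is $(a,b_2,c_2)$. In particular each of $a,b_1,c_1,b_2,c_2$ has rank $1$ over $\emptyset$, which is clause (1), and the two collinear triples are $\acl$-triangles, which is clause (2).

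\emph{Clause (3): non-collinear triples have rank $3$.} Writing $W=\{a,b_1,c_1,b_2,c_2\}$, the relation $a\in\acl(b_1c_1)$ gives $\RM(W)=\RM(b_1b_2c_1c_2)=3$. The two $\acl$-triangles supply the closure relations: inside $\{a,b_1,c_1\}$ any two points are inter-algebraic with the third, and likewise inside $\{a,b_2,c_2\}$. Hence for every non-collinear triple $T$ one checks that $\acl(T)=\acl(W)$: if $a\in T$ then $T$ meets each line in one further point, and the two triangle relations recover the remaining two points of $W$; if $a\notin T$ then $T$ contains both non-$a$ points of one line, which recover $a$, after which the other triangle recovers the last point. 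In either case $\RM(T)=\RM(W)=3$, giving clause (3). The step I expect to require the most care is the rank computation $\RM(b_1c_1)=2$, where the $\Delta$-algebraicity of $F$ (through $a\in\acl(b_1c_1)$ and $c_2\in\acl(ab_2)$) must be combined with additivity and the value $\RM(G)=3$; once the two $\acl$-triangles are in hand, clause (3) is a routine finite check over the eight non-collinear triples.
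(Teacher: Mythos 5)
Your proof is correct and follows essentially the same route as the paper's: produce $a$ from the definition of $G$, use $\Delta$-algebraicity to make every point of the diagram algebraic over any non-collinear triple, use the genericity $\RM(b_1b_2c_1c_2)=\RM(G)=3$ to pin down the ranks, and invoke Claim~\ref{claim:2-ind} to turn the two collinear triples (generic in $F$) into $\acl$-triangles. The only difference is the order of bookkeeping --- the paper first shows non-collinear triples have rank $3$ and deduces $\RM(b_ic_i)=2$ from that, whereas you prove $\RM(b_ic_i)=2$ first by contradiction and verify the non-collinear clause last --- but the ingredients are identical.
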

\begin{proof}
By the definition of $Q$ we can find $a\in X$ with 
\[ \CM\models F(a,b_1,c_1) \,\&\, F(a,b_2,c_2). \] 
We claim that for this $a$ the diagram above is an $\acl$-diagram. 
  
First notice that $\RM(p/\emptyset)\leq 1$ for any point $p$.

Secondly,  by fiber-algebraicity of $F$,
if 
$p,q,r$ are three 
non-colinear points  then every point of the diagram is in 
$\acl(pqr)$;   in particular, since $\RM(b_1b_2c_1c_2/\emptyset)=3$, we have 
 $\RM(pqr/\emptyset)=3$.

Finally, given any two distinct non-collinear points $(p,q)$ we can find a point
$r$ such that $p,q,r$ are non-collinear (so $\RM(pqr/\emptyset) = 3$), hence $\RM(pq/\emptyset)=2$ by additivity of Morley rank.  

It follows then that any three collinear points, after reordering,
form a generic realization of $F$, and hence, by Claim
\ref{claim:2-ind}, an $\acl$-triangle.  
\end{proof}

\subsection{The clause (a) in Theorem~\ref{thm:main1}.}
\label{sec:further-reduction}
In this section we state a property of the relation $Q$ that implies
the clause (a) in Theorem~\ref{thm:main1}.

First we need some basic counting properties.

\begin{cor}\label{cor:point-set-count}
For any $\bar b \in Y^2$ and a finite set $C\subseteq Z$ we
have 
\[ | Q\cap  (\{\bar b\} \times C^2)| \leq d^2|C|. \] 

Similarly, for any $\bar c \in Z^2$ and a finite set $B\subseteq Y$ we
have 
\[ | Q\cap (B^2\times \{\bar c\})| \leq d^2|B|. \] 
\end{cor}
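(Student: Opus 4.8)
The plan is to deduce this immediately from Claim \ref{claim:d-bound} by a fiberwise counting argument. Recall that $G \subseteq Y^2 \times Z^2$ has its coordinates arranged as $(y, y', z, z')$, so that $\{\bar b\} \times C^2$ is precisely the set of tuples $(b_1, b_2, z, z')$ with $z, z' \in C$, and bounding $|G \cap (\{\bar b\} \times C^2)|$ amounts to counting the pairs $(z, z') \in C^2$ for which $(b_1, b_2, z, z') \in G$.

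First I would fix the pair $\bar b = (b_1, b_2)$ and organize the count according to the value of the first $Z$-coordinate. For each $c_1 \in C$, Claim \ref{claim:d-bound} (applied with this $c_1$ as the third argument) tells us that the fiber $\{z \in Z : (b_1, b_2, c_1, z) \in G\}$ has size at most $d^2$; in particular, the number of $z' \in C$ with $(b_1, b_2, c_1, z') \in G$ is at most $d^2$. Summing over the at most $|C|$ possible values of $c_1 \in C$ then yields $|G \cap (\{\bar b\} \times C^2)| \leq |C| \cdot d^2 = d^2 |C|$, as required.

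The symmetric inequality for $\bar c \in Z^2$ and $B \subseteq Y$ follows in exactly the same way, now invoking the second half of Claim \ref{claim:d-bound} (the bound on $\{y \in Y : (b_1, y, c_1, c_2) \in G\}$) and summing over the first $Y$-coordinate. Since the whole argument is just a summation of the uniform fiber bound already established, there is no genuine obstacle here; the only point requiring minor care is keeping the coordinate roles of $G$ straight, so that for the first inequality one slices by a single $Z$-coordinate and applies the first clause of Claim \ref{claim:d-bound}, and dually for the second.
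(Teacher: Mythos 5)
Your proof is correct and is exactly the argument the paper has in mind: the paper's own proof is the one-line remark that the corollary ``follows from Claim~\ref{claim:d-bound},'' and your fiberwise count (slicing on the first $Z$-coordinate, applying the uniform bound $d^2$ from the Claim, and summing over the at most $|C|$ slices, with the dual argument for the second inequality) is precisely the intended deduction spelled out. No gaps; nothing further is needed.
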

\begin{proof}
  Follows from Claim~\ref{claim:d-bound}
\end{proof}

The following bound is similar to \cite[Lemma 2.2]{raz}.

\begin{prop}\label{prop:bound-1}
Let $A \subseteq X, B\subseteq  Y, C\subseteq Z$ be finite. Then for 
$F'=F\cap (A\times B \times C)$  and $Q'= Q\cap
(B^2\times C^2)$
we have 
\[ |F'| \leq d |A|^{1/2}|Q'|^{1/2}. \]
\end{prop}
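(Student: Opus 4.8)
The plan is to run a standard Cauchy--Schwarz double-counting argument, fibering over the $X$-coordinate and using $\Delta$-algebraicity to control multiplicities. For $a \in A$ write $F'_a = \{(y,z) \in B \times C \colon (a,y,z) \in F\}$ for the fiber of $F'$ over $a$, so that $|F'| = \sum_{a \in A} |F'_a|$. The key observation is that any two (not necessarily distinct) pairs $(y,z), (y',z') \in F'_a$ witness, via the common element $a \in X$, that $(y,y',z,z') \in G$, and since $y,y' \in B$ and $z,z' \in C$ this element lies in $G'$.

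Concretely, I would introduce the auxiliary set
\[
 T = \{ (a,y,y',z,z') \colon a \in A,\ (a,y,z) \in F,\ (a,y',z') \in F,\ y,y'\in B,\ z,z'\in C \}
\]
and count $|T|$ in two ways. Summing over $a \in A$ first, for each $a$ the remaining coordinates range over ordered pairs of elements of $F'_a$, giving $|T| = \sum_{a \in A} |F'_a|^2$. On the other hand, projecting onto the last four coordinates, every tuple in $T$ has its $(y,y',z,z')$ in $G'$, and for a fixed such tuple the number of admissible $a$ is at most the number of $x \in X$ with $(x,y,z) \in F$, which is at most $d$ by $\Delta$-algebraicity of $F$ (fixing the $Y$- and $Z$-coordinates bounds the $X$-coordinate by $d$). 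Hence $|T| \leq d|G'|$, so $\sum_{a\in A} |F'_a|^2 \leq d|G'|$.

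Combining the two counts with the Cauchy--Schwarz inequality then finishes the proof:
\[
 |F'| = \sum_{a \in A} |F'_a| \leq |A|^{1/2}\Bigl(\sum_{a \in A}|F'_a|^2\Bigr)^{1/2} \leq |A|^{1/2}\bigl(d|G'|\bigr)^{1/2} = d^{1/2}|A|^{1/2}|G'|^{1/2},
\]
and since $d \geq 1$ this is at most $d|A|^{1/2}|G'|^{1/2}$, as required (in fact one gets the slightly sharper constant $d^{1/2}$).

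There is no serious obstacle here: the argument is purely combinatorial and the only structural input is the uniform fiber bound $d$ coming from $\Delta$-algebraicity. The one point that must be stated carefully is the multiplicity estimate in the second count, namely that recovering $a$ from a fixed $(y,y',z,z') \in G'$ costs only a factor of $d$ rather than $d^2$ — it suffices to use that $(a,y,z)\in F$ already pins down $a$ to at most $d$ values, so the condition $(a,y',z')\in F$ is not even needed for the bound. Everything else is the bookkeeping of the two counts and an application of Cauchy--Schwarz.
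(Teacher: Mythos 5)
Your proof is correct and is essentially identical to the paper's: the paper introduces the same auxiliary set (called $W' = W \cap (A \times B^2 \times C^2)$ there, your $T$), counts it fiberwise over $A$ to get $\sum_{a \in A} |F'_a|^2$, bounds the fiber over each point of $G'$ by $d$ using $\Delta$-algebraicity, and concludes by Cauchy--Schwarz. Your observation that the argument actually yields the sharper constant $d^{1/2}$ is also implicit in the paper's computation, which simply absorbs $d^{1/2} \leq d$ into the stated bound.
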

\begin{proof}
  
Let $W \subseteq X\times Y^2\times Z^2$ be the definable set 
\begin{equation*}
  W=\{ (x,y,y',z,z')\in X\times Y^2\times Z^2  \colon (x,y,z)\in F\,\&\,
  (x,y'z')\in F \},
\end{equation*}
and let $W'=W\cap (A\times B^2\times C^2)$.  

As usual, for a set $S\subseteq A\times D$ and $a\in A$ we denote by
$S_a$ the fiber $S_a=\{ u\in D \colon (a,u)\in S \}.$

 Notice that $|F'|=\sum_{a\in A} |F'_a|$, and  
$|W'|=\sum_{a\in A} |F'_a|^2$. 
By the Cauchy–-Schwarz inequality,
\[ |F'| \leq |A|^{1/2}    \Bigl(\sum_{a\in A}|F'_a|^2\Bigr)^{1/2} =
|A|^{1/2}|W'|^{1/2} .\]

For a point $g\in Q'$, the fiber $W'_g$  
has size at most $d$ as $F$ is fiber-algebraic, hence 
$|W'|\leq d |Q'|$ and $|F'| \leq  d |A|^{1/2}|Q'|^{1/2}$.
\end{proof}

The next proposition shows that the bound $O(n^{3/2-\delta})$ for $Q$
translates to the bound $O(n^{2-\delta})$ for $F$.  

\begin{prop}\label{prop:three-two} Let $\tilde
  Y := Y^2$, $\tilde Z := Z^2$, and we view $Q$ as a subset of
  $\tilde Y\times \tilde Z$. Assume that there are definable sets
  $Y_0 \subseteq \tilde Y$ and $Z_0\subseteq \tilde Z$ of Morley rank
  at most $1$ such that for some $0 < \delta < \frac{1}{2}$  for all
  $B'\subseteq_m \tilde Y\setminus Y_0, C'\subseteq_m \tilde
  Z\setminus Z_0$ we have 
$ |Q\cap (A'\times B')|=O(m^{3/2-\delta})$. 
Then $F$ satisfies the clause $(a)$ of Theorem~\ref{thm:main1} (with $\varepsilon = \delta$).
 \end{prop}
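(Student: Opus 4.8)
The plan is to reduce the count of $F$ to a count of $G$ via the Cauchy--Schwarz bound of Proposition~\ref{prop:bound-1}, then feed in the hypothesized incidence bound for $G$, handling the exceptional sets $Y_0,Z_0$ separately. Concretely, fix finite $A\subseteq_n X$, $B\subseteq_n Y$, $C\subseteq_n Z$ and put $F'=F\cap(A\times B\times C)$ and $G'=G\cap(B^2\times C^2)$. Proposition~\ref{prop:bound-1} gives $|F'|\le d|A|^{1/2}|G'|^{1/2}\le d\,n^{1/2}|G'|^{1/2}$, so it suffices to establish $|G'|=O(n^{3-2\varepsilon})$ for some $\varepsilon>0$: this at once yields $|F'|=O(n^{2-\varepsilon})$, which is clause (a) of Theorem~\ref{thm:main1}.

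The first step is to bound the overlap of the grid $B^2$ with the exceptional set $Y_0$. Since $\RM(Y_0)\le 1$ inside $\tilde Y=Y^2$ (of Morley rank $2$), additivity of Morley rank forces all but finitely many first-coordinate fibers of $Y_0$ to be finite, and elimination of $\exists^\infty$ makes these finite fibers uniformly bounded. Writing $S_1\subseteq Y$ for the finite set of exceptional first coordinates: over $B\cap S_1$ there are at most $|S_1|$ rows, each of length $\le n$, contributing $\le |S_1|\cdot n$ points, while over $B\setminus S_1$ there are at most $n$ rows each of bounded length, contributing $O(n)$ points; hence $|B^2\cap Y_0|=O(n)$, and symmetrically $|C^2\cap Z_0|=O(n)$.

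Finally I would split $B^2\times C^2$ into the ``good'' rectangle $(B^2\setminus Y_0)\times(C^2\setminus Z_0)$ together with the two exceptional pieces $(B^2\cap Y_0)\times C^2$ and $B^2\times(C^2\cap Z_0)$. On the good rectangle both sides have size at most $n^2$ and avoid $Y_0,Z_0$, so the hypothesis applied with $m=n^2$ yields $O\big((n^2)^{3/2-\delta}\big)=O(n^{3-2\delta})$. On each exceptional piece I would invoke Corollary~\ref{cor:point-set-count}, summing $|G\cap(\{\bar b\}\times C^2)|\le d^2n$ over the $O(n)$ points $\bar b\in B^2\cap Y_0$ (and symmetrically over $C^2\cap Z_0$), for a bound of $O(n^2)$ each. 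Adding these contributions gives $|G'|=O(n^{3-2\delta}+n^2)=O(n^{3-2\varepsilon})$ with $\varepsilon=\min(\delta,\tfrac12)$, as required.

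The arithmetic here is routine; the only point demanding care is the treatment of $Y_0$ and $Z_0$, on which the hypothesized sub-$n^{3/2}$ bound is unavailable. The reason this is harmless is exactly that these sets have Morley rank $\le 1$, so they meet the square grids $B^2$ and $C^2$ in only $O(n)$ points, whence even the crude per-point estimate of Corollary~\ref{cor:point-set-count} keeps their total contribution at $O(n^2)$ --- dominated by the main term $n^{3-2\delta}$.
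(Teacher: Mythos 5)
Your proof is correct and follows essentially the same route as the paper's: reduce to bounding $|G\cap(B^2\times C^2)|$ via Proposition~\ref{prop:bound-1}, split off the exceptional pieces $(B^2\cap Y_0)\times C^2$ and $B^2\times(C^2\cap Z_0)$, bound them by $O(n^2)$ using Corollary~\ref{cor:point-set-count} together with $|B^2\cap Y_0|=O(n)$, and apply the hypothesis with $m=n^2$ on the remaining rectangle. Your fiber argument for $|B^2\cap Y_0|=O(n)$ (finitely many infinite fibers by additivity of rank, uniformly bounded finite fibers by elimination of $\exists^\infty$) is exactly the justification the paper leaves implicit with ``it is not hard to see.''
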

 \begin{proof} We fix $Y_0 \subseteq \tY$ and $Z_0 \subseteq \tZ$  of
   Morley rank at most $1$, $c_0 \in \RR$ and $\delta> 0$ such
   that for all $m$ large enough and for all 
   $B'\subseteq_m  \tY\setminus Y_0, C'\subseteq_m
  \tZ\setminus Z_0$ we have 
$ |Q\cap A'\times B'| \leq c_0 m^{3/2-\delta}$. 

Since $Y_0$ has Morley  rank at most $1$, using
elimination of $\exists^\infty$,  it is not hard to see that there is
$k_1\in \NN$ such that for any
finite $B\subseteq Y$ we have $|B^2 \cap Y_0| \leq k_1|B|$.

Similarly, there is $k_2\in \NN$ such that for any
finite $C\subseteq Z$ we have $|C^2 \cap Z_0| \leq k_2|C|$.

Given $A \subseteq_n X$, $B\subseteq_n
Y$, $C\subseteq_n Z$, let 
$B'=B^2\cap (\tY\setminus Y_0)$ and  $C'=C^2\cap (\tZ\setminus Z_0)$. 
Obviously $|B'| \leq  n^2$ and $|C'|\leq n^2$.

We have 
\begin{gather*}
|Q\cap (B^2\times C^2)| \leq \\
|Q\cap (B'\times C')| + \left \lvert Q \cap \left( \left(B^2\cap Y_0 \right)\times C^2 \right) \right \rvert+
  \left \lvert Q \cap \left( B^2 \times \left(C^2\cap Z_0 \right) \right) \right \rvert .
\end{gather*}
By our assumptions $|Q\cap (B'\times C')|\leq  c_0(n^2)^{\frac{3}{2}-\delta} = c_0 n^{3 - 2\delta}$. 
Since $|B^2\cap Y_0| \leq k_1 n$, from
Corollary~\ref{cor:point-set-count},  we get
$\left \lvert Q \cap \left( \left(B^2\cap Y_0 \right)\times C^2 \right) \right \rvert \leq k_1 d^2 n^2$; and similarly 
 $\left \lvert Q \cap \left( B^2 \times \left(C^2\cap Z_0 \right) \right) \right \rvert \leq  k_2 d^2 n^2$. 

Thus $|Q\cap (B^2\times C^2)| \leq c_1 n^{3-2\delta}$,  where $c_1>0$ does not depend on $n, A,B,C$. 

Applying Proposition~\ref{prop:bound-1}, we obtain 
\[|F\cap (A\times
B\times C)| \leq d  (n c_1n^{3-2\delta})^{1/2}=O(n^{2-\delta}).\]
 \end{proof}

Combining this with Theorem \ref{thm:zippel},  we obtain a property of $Q$ that implies the clause $(a)$ in Theorem~\ref{thm:main1}.

\begin{prop}\label{prop:three-two} Let $\tilde
  Y =Y^2$, $\tilde Z=Z^2$, and we view $Q$ as a subset of
  $\tilde Y\times \tilde Z$. Assume in addition that $Q$  admits cuttings (with some exponent). Assume also that there are definable sets
  $Y_0 \subseteq \tilde Y$ and $Z_0\subseteq \tilde Z$ of Morley rank
  at most $1$ such that the restriction of $Q$ to $(\tY\setminus
  Y_0)\times (\tZ\setminus Z_0)$ is not cartesian.
Then $F$ satisfies the clause $(a)$ of Theorem~\ref{thm:main1}.
 \end{prop}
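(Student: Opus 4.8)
The plan is to combine the two previously established results in a straightforward chain. First I would observe that the hypotheses exactly match what is needed to apply Theorem~\ref{thm:zippel} to the relation $G \subseteq \tilde Y \times \tilde Z$, where $\tilde Y = Y^2$ and $\tilde Z = Z^2$. Indeed, $\CM$ has finite Morley rank and eliminates $\exists^\infty$ by the standing Assumptions; both $\tilde Y$ and $\tilde Z$ have Morley rank $2$ (as squares of strongly minimal sets) and Morley degree $1$; and by Corollary~\ref{cor:dim-fiber-i-one} every fiber $G_{\bar b}$ has Morley rank at most $1$. So $G$ satisfies all the structural hypotheses of Theorem~\ref{thm:zippel}.

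However, Theorem~\ref{thm:zippel} applies to $G$ itself, whereas the hypothesis here is that the \emph{restriction} of $G$ to $(\tilde Y \setminus Y_0) \times (\tilde Z \setminus Z_0)$ is not cartesian. The plan is therefore to work with this restricted relation $G' := G \cap ((\tilde Y \setminus Y_0) \times (\tilde Z \setminus Z_0))$. I would note that passing to the co-small definable sets $\tilde Y \setminus Y_0$ and $\tilde Z \setminus Z_0$ (both still of Morley rank $2$ and degree $1$, since $Y_0, Z_0$ have Morley rank at most $1$) preserves all the structural hypotheses of Theorem~\ref{thm:zippel}, and that the fiber-rank condition and admitting cuttings both restrict. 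Since $G'$ is assumed not cartesian and admits cuttings, clause~(4) of Theorem~\ref{thm:zippel} gives some $\delta > 0$ such that for all $B' \subseteq_m \tilde Y \setminus Y_0$ and $C' \subseteq_m \tilde Z \setminus Z_0$ we have $|G' \cap (B' \times C')| = O(m^{3/2 - \delta})$.

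The final step is to feed this bound into the previous Proposition~\ref{prop:three-two} (the first one, stating that an $O(m^{3/2-\delta})$ bound off of $Y_0, Z_0$ yields clause~(a) for $F$). Since $G' \cap (B' \times C') = G \cap (B' \times C')$ for $B', C'$ avoiding $Y_0, Z_0$, the hypothesis of that proposition is exactly satisfied, and we conclude that $F$ satisfies clause~(a) of Theorem~\ref{thm:main1}.

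The only genuine care needed is verifying that the structural hypotheses of Theorem~\ref{thm:zippel} survive the restriction to the complements of $Y_0$ and $Z_0$ — in particular, that $\tilde Y \setminus Y_0$ and $\tilde Z \setminus Z_0$ remain of Morley rank $2$ and degree $1$, which follows because removing a rank-$\leq 1$ set from a rank-$2$ degree-$1$ set changes neither the rank nor the degree. Everything else is bookkeeping; I do not expect a substantive obstacle, as this proposition is essentially the assembly point where Theorem~\ref{thm:zippel}(4) and Proposition~\ref{prop:three-two} are glued together.
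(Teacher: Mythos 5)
Your proposal is correct and is precisely the argument the paper intends: the paper states this proposition as the result of "combining" Theorem~\ref{thm:zippel}(4) with the preceding proposition, and your chain (restrict $G$ to $(\tY\setminus Y_0)\times(\tZ\setminus Z_0)$, check that rank~$2$/degree~$1$, the fiber-rank bound from Corollary~\ref{cor:dim-fiber-i-one}, and the cutting property all survive restriction, apply Theorem~\ref{thm:zippel}(4) to get the $O(m^{3/2-\delta})$ bound, then invoke the earlier proposition) is exactly that combination, with the restriction-of-hypotheses details spelled out. No gap.
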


\subsection{The clause (b) of Theorem~\ref{thm:main1}.}

We fix a saturated elementary extension $\UU$ of $\CM$.

\begin{prop}\label{prop:w-case-b}   Assume there are
  $\beta=(\beta_1,\beta_2)\in Y^2(\UU)$ and
  $\gamma=(\gamma_1,\gamma_2)\in Z^2(\UU)$ with $(\beta,\gamma)\in Q(\UU)$, such that 
  $\RM(\beta/M), \RM(\gamma/M) >0$, 
$\beta \ind_M \gamma$
and $\acl(\beta)\cap \acl(\gamma) \not\subseteq
\acl(\emptyset)$. Then $F$  is group-like.
\end{prop}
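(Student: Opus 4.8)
The plan is to manufacture a group configuration in $\CM^\eq$ out of the given data and feed it into Hrushovski's group configuration theorem for stable theories; this produces the definable rank-one group demanded by Definition~\ref{def: group-like}. First I would unpack the hypotheses. Since $(\beta,\gamma)\in G(\UU)$, the definition of $G$ gives a witness $a\in X(\UU)$ with $\models F(a,\beta_1,\gamma_1)$ and $\models F(a,\beta_2,\gamma_2)$. Fix $e\in\acl(\beta)\cap\acl(\gamma)$ with $e\notin\acl(\emptyset)$; after replacing $e$ by a suitable element of $\acl(e)$ I may assume $\RM(e/\emptyset)=1$. I would then look at the six points
\[ a,\ \beta_1,\ \beta_2,\ \gamma_1,\ \gamma_2,\ e \]
together with the four collinear triples
\[ \{a,\beta_1,\gamma_1\},\quad \{a,\beta_2,\gamma_2\},\quad \{e,\beta_1,\beta_2\},\quad \{e,\gamma_1,\gamma_2\}. \]
The first two lines are $F$-collinear, hence $\acl$-triangles by Claim~\ref{claim:2-ind} (exactly as in Lemma~\ref{lem:almost-g}); the last two record $e\in\acl(\beta_1\beta_2)$ and $e\in\acl(\gamma_1\gamma_2)$. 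Combinatorially this is the complete-quadrilateral shape of a group configuration: $a$ and $e$ are the two diagonal points, and each of the six points lies on precisely two of the four lines.

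The technical heart, and the step I expect to be the main obstacle, is to verify that this diagram is a genuine (non-degenerate) $\acl$-diagram, and hence a group configuration, over $\acl(\emptyset)$. Concretely I must check that every point has Morley rank $1$, that each listed triple is an $\acl$-triangle — so that, for instance, $\beta_2\in\acl(e\beta_1)$ and not merely $e\in\acl(\beta_1\beta_2)$ — and that every non-collinear triple is independent of total rank $3$. This is where the hypotheses earn their keep: the conditions $\RM(\beta/M)>0$, $\RM(\gamma/M)>0$ and $\beta\ind_M\gamma$ push the four points $\beta_1,\beta_2,\gamma_1,\gamma_2$ into sufficiently general position (ensuring that $(\beta,\gamma)$ is generic in $G$ and that the two $F$-triples are honest $\acl$-triangles), while the crucial assumption $\acl(\beta)\cap\acl(\gamma)\not\subseteq\acl(\emptyset)$ is exactly what forces the diagonal point $e$ to have rank $1$ rather than collapsing into $\acl(\emptyset)$. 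In other words, that last hypothesis is precisely the non-degeneracy that prevents the configuration from producing only a trivial group.

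I expect the genuinely delicate point to be the management of the base. The independence in the hypothesis is taken over the model $M$, whereas the group configuration axioms must be arranged over $\acl(\emptyset)$; since $e$ is algebraic over $M$ but not over $\emptyset$, the two are really different, and some care is needed to reconcile them — most likely by passing to a generic realization of the relevant type over $\acl(\emptyset)$ (or to independent copies) and transporting the existence of the witnessing $e$ to that generic instance, before reading off the rank conditions displayed above.

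Finally I would apply Hrushovski's group configuration theorem to obtain a group $H$ definable in $\CM^\eq$ whose canonical configuration is, point by point, interalgebraic over a small set with the one constructed above. Because $\CM$ has finite Morley rank and eliminates $\exists^\infty$, the a priori type-definable group is definable; it has Morley rank $1$, and replacing it by its connected component I may assume it has degree $1$, whence it is abelian, as required. Transporting the interalgebraicity along the line $\{a,\beta_1,\gamma_1\}$ — the $F$-collinearity playing the role of the group law — yields $g_1,g_2,g_3\in H$ interalgebraic (over the small set) with $a\in X$, $\beta_1\in Y$, $\gamma_1\in Z$ respectively, and satisfying $g_1 g_2 g_3=1$ after adjusting by inverses; moreover $(a,\beta_1,\gamma_1)\in F$ is generic by construction. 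This is exactly the data of Definition~\ref{def: group-like}, so $F$ is group-like.
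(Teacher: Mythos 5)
Your configuration and overall strategy are exactly those of the paper: the six points $\alpha,\beta_1,\beta_2,\gamma_1,\gamma_2,e$ with the four lines you list, verified to be an $\acl$-diagram and fed into Hrushovski's group configuration theorem. However, the step you yourself flag as delicate --- reconciling the hypotheses over $M$ with the rank conditions over $\emptyset$ that the $\acl$-diagram requires --- is a genuine gap, and the fix you sketch does not work. Passing to a ``generic realization of the relevant type over $\acl(\emptyset)$'' gains nothing: $\RM(\beta\gamma/\emptyset)$ is an invariant of $\tp(\beta\gamma/\emptyset)$, so any realization of the same type is exactly as generic as the pair you started with; and if you pass to a different type, you can no longer transport the witness $e$, whose existence is guaranteed only for the given $\beta,\gamma$. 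So after this maneuver you still owe proofs that $\RM(\beta\gamma/\emptyset)=3$, that $\RM(e/\emptyset)=1$ (your proposed replacement of $e$ by an element of $\acl(e)$ of rank $1$ is itself unjustified --- such an element would also have to stay inside $\acl(\beta)\cap\acl(\gamma)$ and outside $\acl(\emptyset)$), and that, e.g., $\{e,\beta_1,\beta_2\}$ is an honest $\acl$-triangle rather than merely satisfying $e\in\acl(\beta_1\beta_2)$.

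The missing observation, which is the first line of the paper's proof, is that $e$ already lies in $M$ (as an element of $M^{\eq}$): since $\beta\ind_M\gamma$, one has $\acl(\beta)\cap\acl(\gamma)\subseteq\acl(M\beta)\cap\acl(M\gamma)=\acl(M)$, so the common algebraic point is in the model. This single fact dissolves the base-management problem. From $e\in M$ and $\RM(\beta/M)>0$ you get $\RM(\beta/e)\geq\RM(\beta/M)\geq 1$, while $e\in\acl(\beta)\setminus\acl(\emptyset)$ gives $\beta\nind_\emptyset e$, hence $\RM(\beta/e)<\RM(\beta/\emptyset)\leq 2$; therefore $\RM(\beta/\emptyset)=2$, $\RM(\beta/e)=\RM(\beta/M)=1$, and $\RM(e/\emptyset)=1$ by additivity (no adjustment of $e$ needed), and symmetrically for $\gamma$. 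Moreover $\beta\gamma\nind_\emptyset M$ (witnessed by $e\in M$) together with $\RM(\beta\gamma/M)=2$ forces $\RM(\beta\gamma/\emptyset)=3$, i.e. $(\beta,\gamma)$ is generic in $G$; genericity plus Lemma~\ref{lem:almost-g} then yields $\beta_i\notin\acl(\gamma)$ and $\gamma_i\notin\acl(\beta)$, from which the non-degeneracy of the two new lines follows (if $e\in\acl(\beta_i)$ then, both being of rank $1$, $\beta_i\in\acl(e)\subseteq\acl(\gamma)$, a contradiction). Only with these computations in place is your quadrilateral an $\acl$-diagram, after which the appeal to the group configuration theorem proceeds as you describe.
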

\begin{proof}
Choose $t\in  \bigl(\acl(\beta)\cap \acl(\gamma)\bigr) \setminus
\acl(\emptyset)$.  We first list some properties of $\beta,\gamma$
and $t$. 

\begin{enumerate}[(i)]
\item   Since $\beta \ind_M \gamma$, and $t\in  \bigl(\acl(\beta)
  \cap \acl(\gamma)\bigr)$   we have $$t\in M.$$ 
\item Since $t\in \acl(\beta) \setminus \acl(\emptyset)$ we have 
\[ \beta\nind_\emptyset  t; \] 
and, similarly, \[\gamma\nind_\emptyset t.\]
\item From $(i)$ and $(ii)$,  
since $\RM(\beta/\emptyset)\leq 2$ and  $\RM(\beta/M)>0$, we
  obtain 
\[\RM(\beta/\emptyset)=2 \text{ and } \RM(\beta/t) = \RM(\beta/M)=1;\]
and, similarly, 
\[ \RM(\gamma/\emptyset)=2 \text{ and }\RM(\gamma/t) =
  \RM(\gamma/M)=1.\]
\item Since $t\in \acl(\beta) \setminus \acl(\emptyset)$ and 
$\beta\notin \acl(t)$ we have $\RM(t/\emptyset)=1$.
\item \label{item: 5 in group config} Since $\beta$ and $\gamma$ are independent over $M$ we have
  $RM(\beta\gamma/M)=2$, and since $\beta \nind_\emptyset  M$, we have 
  $RM(\beta\gamma/\emptyset)=3$, i.e. $(\beta,\gamma)$ is generic in
  $Q(\UU)$.
\item \label{item: 6 in group config} It follows from \ref{item: 5 in group config} and Lemma \ref{lem:almost-g} that $\beta_i
  \notin \acl(\gamma)$ and $\gamma_i\notin\acl(\beta)$ for $i\in[2]$. 
\end{enumerate}

We also have that both $\{ \beta_1,\beta_2,t\}$ and  $\{
\gamma_1,\gamma_2,t\}$ are $\acl$-triangles.  Indeed, for example,  since
$\RM(\beta_1\beta_2t/\emptyset)=2$, 
to show that 
 $\{ \beta_1,\beta_2,t\}$ is a triangle it is sufficient to check that
 $t\not\in \acl(\beta_i)$ for $i=1,2$. But if $t\in \acl(\beta_i)$,
 then $\beta_i\in\acl(t)$, hence $\beta_i\in \acl(\gamma)$ --- 
 contradicting \ref{item: 6 in group config}.

By Lemma~\ref{lem:almost-g} there is $\alpha\in X(\UU)$ such that the
diagram 
\begin{equation}
  \label{eq:1}
\begin{tikzpicture}[scale=0.9]
\draw[black,line  width=0.2mm] (0,0) -- (0,2); 
\draw[black,line  width=0.2mm] (0,0) -- (2,0); 
\node at (0,0) {$\bullet$};
\node[below,scale=1.0] at (0,0) {$\alpha$}; 
\node at (1,0) {$\bullet$}; 
\node[below,scale=1.0] at (1,0) {$\beta_1$}; 
\node at (2,0) {$\bullet$}; 
\node[below,scale=1.0] at (2,0) {$\gamma_1$}; 
\node at (0,1) {$\bullet$}; 
\node[left,scale=1.0] at (0,1) {$\gamma_2$}; 
\node at (0,2) {$\bullet$}; 
\node[left,scale=1.0] at (0,2) {$\beta_2$}; 
\end{tikzpicture}
\end{equation}
is an $\acl$-diagram with $\UU\models F(\alpha,\beta_i,\gamma_i)$.

We  claim that 
\begin{equation}
  \label{eq:2}
  \begin{tikzpicture}[scale=0.9]
\draw[black,line  width=0.2mm] (0,0) -- (0,2); 
\draw[black,line  width=0.2mm] (0,0) -- (2,0); 
\draw[black,line  width=0.2mm] (1,0) -- (0,2); 
\draw[black,line  width=0.2mm] (0,1) -- (2,0); 
\node at (0,0) {$\bullet$};
\node[below,scale=1.0] at (0,0) {$\alpha$}; 
\node at (1,0) {$\bullet$}; 
\node[below,scale=1.0] at (1,0) {$\beta_1$}; 
\node at (2,0) {$\bullet$}; 
\node[below,scale=1.0] at (2,0) {$\gamma_1$}; 
\node at (0,1) {$\bullet$}; 
\node[left,scale=1.0] at (0,1) {$\gamma_2$}; 
\node at (0,2) {$\bullet$}; 
\node[left,scale=1.0] at (0,2) {$\beta_2$}; 
\node at (0.65,0.65) {$\bullet$}; 
\node at (0.9,0.9) {$t$}; 
\end{tikzpicture}
\end{equation}
is an $\acl$-diagram.

 We already have that every three colinear points form an
 $\acl$-triangle, and it is sufficient to check that for three
 non-colinear points $\{p,q,r\}$ we have $\RM(pqr/\emptyset)=3$. 
 If $t\notin\{p,q,r\}$  then it follows from the diagram~\eqref{eq:1}.
 Assume $t\in\{p,q,r\}$, say $\{p,q,r\}=\{t, \alpha, \beta_1\}$. Then
 $\{t,\beta_1\}$ is inter-algebraic with $\{\beta_1,\beta_2\}$ and hence
 $$\RM(t\alpha\beta_1/\emptyset)=\RM(\beta_2\alpha\beta_1/\emptyset)=3.$$
 The same argument works for any three non-collinear points containing
 $t$. 
 
 Thus the diagram~\eqref{eq:2} is an $\acl$-diagram. It follows from the Group Configuration Theorem in stable theories (see {\cite[Theorem
  6.1]{HZ}} and the discussion in \cite[Section 6.2]{HZ})
that $F$ is group-like.
\end{proof}

%

\section{Dichotomy for binary relations}
\label{sec:proof-theor-refthm:m}

In this section we prove a dichotomy theorem for binary relations between sets of Morley rank $2$. By Propositions~\ref{prop:three-two} and \ref{prop:w-case-b},
Theorem~\ref{thm:main1} follows from Theorem \ref{thm:main-4} applied with $\Phi := Q, \tilde{Y} := Y^2, \tilde{Z} := Z^2$ as in Section \ref{sec:main-theorem}.

\begin{thm}\label{thm:main-4}
Let $\CM$ be a sufficiently saturated structure of finite Morley rank that eliminates quantifier $\exists^\infty$.

Let $\tY$ and $\tZ$ be $M$-definable sets of Morley rank $2$ and Morley
degree $1$. Let $\varPhi \subseteq \tY\times \tZ$ be a definable
subset of Morley rank $3$. Then one of the following holds.
\begin{enumerate}[(a)]
  \item \label{item: Dichotomy for G 1}There are definable sets $Y_0\subseteq \tY$ and $Z_0\subseteq
    \tZ$ of Morley rank at most $1$ such that the restriction of $\varPhi$
    to $(\tY\setminus Y_0)\times (\tZ\setminus Z_0)$ is not cartesian.
\item  \label{item: Dichotomy for G 2}There are
  $\beta\in\tY(\UU)$ and
  $\gamma\in \tZ(\UU)$ with $(\beta,\gamma)\in \varPhi(\UU)$, such that 
  $\RM(\beta/M), \RM(\gamma/M) >0$, 
$\beta \ind_M \gamma$
and $\acl(\beta)\cap \acl(\gamma) \not\subseteq
\acl(\emptyset) $. 
\end{enumerate}
\end{thm}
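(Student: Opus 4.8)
The plan is to prove the dichotomy by analyzing a single generic point of $\varPhi$ and splitting on the Morley rank of a canonical base. First I would dispose of the degenerate cases: if the projection of $\varPhi$ to $\tY$ (resp. $\tZ$) is not dominant, its image is contained in a definable set of rank $\leq 1$, so clause \eqref{item: Dichotomy for G 1} holds trivially (the restriction to the complement is empty, hence not cartesian). So I may assume both projections are dominant. Since $\RM(\varPhi)=3$ and $\RM(\tY)=\RM(\tZ)=2$, a generic point $(\beta,\gamma)\in\varPhi(\UU)$ over $M$ then has $\beta,\gamma$ generic in $\tY,\tZ$ respectively, with $\RM(\beta/M\gamma)=\RM(\gamma/M\beta)=1$, so $\beta\nind_M\gamma$, and the generic fiber $\varPhi_\beta$ has rank exactly $1$. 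Set $t:=\operatorname{Cb}(\gamma/M\beta)$. A routine computation ($\RM(t\gamma/M)=\RM(t/M)+\RM(\gamma/Mt)=\RM(t/M)+1$, compared with $\RM(\gamma/M)+\RM(t/M\gamma)=2+\RM(t/M\gamma)$) gives $\RM(t/M)=1+\RM(t/M\gamma)$, and since $\beta\nind_M\gamma$ forces $\RM(t/M)\geq 1$, we have the clean dichotomy $\RM(t/M)\in\{1,2\}$. I would also record that $t$ is interalgebraic, over $\acl(\emptyset)$, with the canonical parameter of the generic type of the $\emptyset(\beta)$-definable fiber $\varPhi_\beta$, so that $t\in\acl(\beta)$; and the displayed rank computation yields $\RM(t/M\gamma)=0$, i.e.\ $t\in\acl(M\gamma)$ as well.

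\textbf{Case $\RM(t/M)=2$.} Here $t$ is interalgebraic with $\beta$ over $M$, so the assignment $\beta\mapsto\varPhi_\beta$ is generically finite-to-one: for independent generic $\beta,\beta'$ the intersection $\varPhi_\beta\cap\varPhi_{\beta'}$ has rank $0$ (otherwise the generic type of $\varPhi_\beta$ would lie in $\varPhi_{\beta'}$, forcing $t(\beta)\in\acl(M\beta')$ against independence). Thus the fibers are in general position. Using elimination of $\exists^\infty$, the set $Y_0\subseteq\tY$ of $\beta$ whose fibers meet infinitely many other fibers in an infinite set is definable of rank $\leq 1$, and symmetrically for $Z_0\subseteq\tZ$. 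I would then argue that after deleting $Y_0,Z_0$ no popular type can survive: a pop type would produce, via Lemma~\ref{lem:pop=eq} and Proposition~\ref{prop:cart-types}, an infinite family of fibers with a common infinite intersection, contradicting the bounded-intersection property on the complement. By Proposition~\ref{prop:basic-eq} the restriction of $\varPhi$ to $(\tY\setminus Y_0)\times(\tZ\setminus Z_0)$ is not cartesian, giving clause~\eqref{item: Dichotomy for G 1}.

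\textbf{Case $\RM(t/M)=1$.} I produce clause \eqref{item: Dichotomy for G 2} by specializing the rank-$1$ ``fiber coordinate'' into $M$. First, define symmetrically $t'':=\operatorname{Cb}(\beta/M\gamma)\in\acl(\gamma)$; since $t=\operatorname{Cb}(\gamma/M\beta)$ gives $\beta\ind_{Mt}\gamma$, we get $t''\in\acl(Mt)$, whence $\RM(t''/M)=1$ and $t,t''$ are interalgebraic over $M$. The key point is to descend this to interalgebraicity over $\acl(\emptyset)$: as both $t$ and $t''$ arise from $\acl(\emptyset)$-based fiber data, the correspondence relating them on generic pairs is $\acl(\emptyset)$-definable, so $t''(\gamma)\in\acl(\emptyset,t(\beta))$ and conversely. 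The common image $T$ (the range of $t$) is then a rank-$1$ set definable over $\acl(\emptyset)$; choose $s\in T(M)\setminus\acl(\emptyset)$, which exists since $M$ is a saturated model and $T$ has rank $1$. Now take $\hat\beta$ generic over $M$ in the rank-$1$ fiber $\{y:t(y)=s\}$, so $\RM(\hat\beta/M)=1$ and $s\in\acl(\hat\beta)$, and take $\hat\gamma$ generic in $\varPhi_{\hat\beta}$ over $M\hat\beta$; then $(\hat\beta,\hat\gamma)\in\varPhi$, $t''(\hat\gamma)\in\acl(\emptyset,s)$ so $s\in\acl(\hat\gamma)$, and $\RM(\hat\gamma/M)=\RM(\hat\gamma/M\hat\beta)=1$ forces $\hat\beta\ind_M\hat\gamma$. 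Since $s\in\bigl(\acl(\hat\beta)\cap\acl(\hat\gamma)\bigr)\setminus\acl(\emptyset)$ and both ranks over $M$ are positive, clause~\eqref{item: Dichotomy for G 2} holds.

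The main obstacle is the descent step in the second case: showing that the two one-sided fiber canonical parameters $t$ and $t''$, which are manifestly interalgebraic over $M$, are already interalgebraic over $\acl(\emptyset)$, so that one single $s\in M$ can serve as an element of $\acl(\hat\beta)\cap\acl(\hat\gamma)$ computed over $\emptyset$. This requires careful bookkeeping in $\CM^{\eq}$ to keep all canonical parameters over $\acl(\emptyset)$ rather than over $M$, together with a check that the specialized pair $(\hat\beta,\hat\gamma)$ remains generic in its fiber and $M$-independent. In the first case the analogous technical point is turning ``fibers in general position'' into the statement that all popular types are supported on a rank-$\leq 1$ locus removable by $Y_0,Z_0$, which I would carry out through elimination of $\exists^\infty$ and the pop-type characterization of Proposition~\ref{prop:basic-eq}.
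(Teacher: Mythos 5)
Your Case 1 contains a gap that is fatal to the whole argument. The inference from ``for independent generic $\beta,\beta'$ the intersection $\varPhi_\beta\cap\varPhi_{\beta'}$ has rank $0$'' to ``the set $Y_0$ of those $\beta$ whose fiber meets infinitely many other fibers in an infinite set is definable of rank $\leq 1$'' is false: generic transversality only controls pairs $(\beta,\beta')$ that are generic in $\tY\times\tY$, while the pairs with infinite intersection can form a rank-$3$ family of \emph{non-generic} pairs, in which case $Y_0$ has rank $2$. Concretely, work in a saturated algebraically closed field of characteristic $0$, take $\tY=\tZ=\CC^2$ and
\[
\varPhi=\bigl\{((y_1,y_2),(z_1,z_2)): z_2=y_1z_1+y_2\bigr\}\cup\bigl\{((y_1,y_2),(z_1,z_2)): z_1=y_1\bigr\}.
\]
All hypotheses hold ($\RM(\varPhi)=3$; $\tY,\tZ$ of rank $2$ and degree $1$; note the theorem does not assume $\varPhi$ has degree $1$). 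If your generic point $(\beta,\gamma)$ is chosen on the point--line component, then $t=\operatorname{Cb}(\gamma/M\beta)$ is interalgebraic with $\beta$ over $M$, so you are in Case 1, and independent generic fibers do meet in finitely many points; yet every fiber $\varPhi_\beta$ contains the line $\{z: z_1=\beta_1\}$ and hence meets the infinitely many fibers $\varPhi_{\beta'}$ with $\beta'_1=\beta_1$ in an infinite set, so your $Y_0$ is all of $\tY$. Moreover clause (a) genuinely fails for this $\varPhi$: no rank-$\leq 1$ sets $Y_0,Z_0$ can kill the pencil $\{z_1=y_1\}$, since for all but finitely many $c$ the sets $\{y: y_1=c\}\setminus Y_0$ and $\{z: z_1=c\}\setminus Z_0$ remain infinite and their product lies in $\varPhi$. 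Instead (b) holds (take $\beta=(c,u)$, $\gamma=(c,v)$ with $c\in M\setminus\acl(\emptyset)$ and $u,v$ mutually generic over $M$). So Case 1, as written, proves a false statement. The root cause is structural: the cartesian behaviour responsible for the failure of (a) lives on pairs of rank $(1,1)$, which are never generic in $\varPhi$, so no analysis of a single generic type of $\varPhi$ (of which there may be several) can decide the dichotomy.

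This is precisely where the paper's proof differs: it never looks at generic points of $\varPhi$ at all. It works with the popular types of Definition~\ref{def: pop type} (non-algebraic rank-$1$ types $p$ whose set $d^\Phi_p$ is infinite), shows that if (a) fails there must be infinitely many of them, proves that their canonical parameters $[p]$ form an infinite \emph{type-definable} family, and uses saturation to find some $[p]\notin\acl(\emptyset)$; the witnesses for (b) are then a pop pair $\beta\models p$, $\gamma\models q$ with $\Phi(p,q)$, chosen independent over $M$ --- a pair of rank $(1,1)$, not a generic point of $\varPhi$. This rank-$1$ structure is exactly what your Case 1 cannot see, and recovering it requires essentially all of the paper's machinery. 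For what it is worth, your Case 2 is correct in substance: the descent you flag as the main obstacle (interalgebraicity of $t$ and $t''$ over $\acl(\emptyset)$ rather than over $M$) follows from $\beta\gamma\ind_\emptyset M$, which is automatic since $\RM(\beta\gamma/\emptyset)=\RM(\beta\gamma/M)=3$, and the specialization of $s$ into $M$ then goes through; but since Case 1 is wrong, the proposal does not prove the theorem.
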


\begin{proof}
As usual, for a $\Phi$-type $p(y)\in
S_\Phi(M)$,  we denote by $\RM(p(y))$ the Morley rank of $p$ as
an incomplete type.

We assume that \ref{item: Dichotomy for G 1} doesn't hold, and show that then \ref{item: Dichotomy for G 2} must hold. 
For a definable set $\tY' \subseteq \tY$ we say that 
$\tY'$ is \emph{large} in  $\tY$ if $\RM(\tY \setminus \tY') \leq  1$;
and the same for a subset $\tZ' \subseteq \tZ$. Notice that in the proof we can freely replace $\tY$ and $\tZ$ by their
large subsets.

Let $p^*(y)\in S(M)$ be the generic type on $\tY$, it is the unique type
on $\tY$ of Morley rank $2$ (as $\tY$ has Morley degree $1$ by assumption).  

Since $\Phi$ has Morley rank $3$, the set $\{ c\in \tZ \colon
\Phi_c \in p^* \}$ is definable (by Fact \ref{fact-loc-tab-def}) and
has Morley rank at most $1$ by additivity of Morley rank. Thus we can
throw away this set and assume that the Morley rank of $\Phi_c$
is at most $1$ for all  $c\in \tZ$. 
Similarly, we may assume that  the Morley rank of $\Phi_b \subseteq \tZ$
is at most $1$ for all  $b\in \tY$.

Assume that $p$ is a popular type for $\Phi$ (see Definition \ref{def: pop type}). Then $\RM(p) = 1$ ($\RM(p) \geq 1$ as $p$ is non-algebraic, and $\RM(p) \leq 1$ as $\Phi_c \in p$ for some $c$ by definition of popular types, and $\RM(\Phi_c) = 1$ by the previous paragraph). If there are only finitely many popular types for $\Phi$, then we can
throw
away finitely many definable sets of Morley rank $1$ (one in each of the popular types),  and pass to a large subset on which there are no popular types (hence, obtaining \ref{item: Dichotomy for G 1} using Proposition \ref{prop:basic-eq}).  Thus we can assume that there are
infinitely many popular types on $\tY$.

\gdef\CQ{\mathcal{Q}}
Let $\CP$ be the set of all popular types on $\tY$ and $\CQ$ be the set of all popular types on $\tZ$.

For $p \in S_\Phi(M)$ we denote by $[p] \in \CM^\eq$ the canonical parameter of 
the $\Phi^*$-definable set  $d_p^\Phi = \{ c\in \tZ \colon \Phi_c \in p
\}$. Recall that by Fact \ref{fact-loc-tab-def} the definition $d_p^{\Phi}$ is given by instances of the same formula for all $p$, hence $[p]$ is the canonical parameter of an instance of the same formula for all $p$, hence $\{[p] : p \in S_\Phi(M)\}$ is a subset of a fixed sort in $\mathcal{M}^{\eq}$; similarly, for  $q\in S_{\Phi^*}(M)$ we will denote by $[q] \in \CM^\eq$ the
canonical parameter for $d^{\Phi^*}_q$. 

Clearly both  maps  $p \mapsto [p]$ and $q\mapsto [q]$ are injective.

\begin{claim}
 The sets $\{ [p] \colon p\in \CP \}$ and $\{ [q] \colon q\in \CQ \}$ are $\emptyset$-type-definable subsets of the corresponding sorts in $\mathcal{M}^{\eq}$. 
  \end{claim}

\begin{proof}
Using that $\CM$ eliminates $\exists^\infty$ (and uniform definability of types), the desired set $\{ [p] : p \in \mathcal{P}\}$  is type-definable by
$$\{[p] : \exists^\infty z d_p^\Phi(z) \land \bigwedge_{n \in \mathbb{N}} (\forall z_1 \ldots \forall z_n ( \bigwedge_{i=1}^n d^\Phi_p(z_i) \rightarrow \exists^\infty y \bigwedge_{i=1}^n\Phi(y,z_i) \}.$$
And similarly for $\mathcal{Q}$.
\end{proof}

\begin{claim} If $p\in \CP$ and $c\in d^\Phi_p$ then $[p]\in
  \acl(c)$.   
    
Similarly, if $q\in \CQ$ and $b\in d^{\Phi^*}_q$ then $[q]\in
  \acl(b)$.   
\end{claim}
\begin{proof}
As for any $c \in \tZ$ there are only finitely many $\Phi$-types of Morley rank $1$ containing $\Phi_c$.
\end{proof}

\begin{claim}\label{lem:fin} For any $p\in \CP$ there are only finitely many $q\in \CQ$
  with $\models\Phi(p,q)$, and vice versa.
\end{claim}
\begin{proof}  Let $\beta\in \UU$ realize $p$. It is not hard to see
  that, for $q\in \CQ$, if  $\models\Phi(p,q)$ then the Morley rank
  of the partial type $\{ \Phi(\beta,z) \}\cup q(z)$ is $1$.  Since  the Morley rank of
  $\Phi(\beta,z)$ is $1$, there only finitely many such $q$.  
  \end{proof}

Since the set $\CP$ is infinite,  we can find a popular type $p \in \CP$ such that $[p] \notin \acl(\varnothing)$. 
Choose $q\in \CQ$ with $\models \Phi(p,q)$.

Choose some $\beta\models p(y)$ and $\gamma\models q(z)$ independent over
$M$. 

We have $[p]\in \acl(\gamma)$, $[q]\in \acl(\beta)$ with $[p]$ and
$[q]$ inter-algebraic over the empty set.  Hence \ref{item: Dichotomy for G 2} holds.
\end{proof}

\if 0 

Our model $\CM$ is just a bi-partite graph $\langle \tY, \tZ,
\varphi(y,z) \rangle$.    Let  $\varPhi\subseteq \tY \times \tZ$ be the set
of realization of $\varphi$ in $M$.   

Throughout the paper we assume that the relation $\varphi$ is stable. 

For a convenience  we assume that $\CM$ is sufficiently saturated.

As usual, by a $\varphi$-definable set we mean as subset $B\subseteq
\tY$  that is a finite Boolean combination of  sets defined by
$\varphi(y,c)$, $c\in \tZ$. 

Similarly we have a notion of $\varphi^*$-definable subsets of
$\tZ$.

As usual we denote by $S_\varphi(M)$ the set of all complete over $M$
$\varphi$-types on $\tY$ (equivalently, ultrafilters of $\varphi$-definable sets),
and similarly we denote by  $S_{\varphi^*}(M)$ the set of all complete
$\varphi^*$ types on $\tZ$.

As usual we say that  a $\varphi$-type $p(y)$ is not-algebraic, if 
it has a realization outside of $\CM$.

The following  are  basic facts from local stability. 
\begin{fact}\label{fact-loc-tab-def}
For  $p(y) \in S_\varphi(M)$ the set $\{ c\in \tZ \colon \varphi(y,c)
\in p\}$ is uniformly $\varphi^*$-definable.

Similarly for $q\in S_{\varphi^*}(M)$ the set 
$\{ b\in \tY \colon \varphi(b,z)
\in q\}$ is uniformly $\varphi$-definable.
\end{fact}

\begin{fact}\label{fact:phi-ind}
Let $\UU$ be an elementary  extension of $\CM$,  
$\beta\in \tY(\UU)$ and $\gamma\in \tZ(\UU)$. 
Then $\tp_\varphi(\beta/M\gamma)$ is finitely realizable in $M$ if and
only if  $\tp_{\varphi^*}(\gamma/M\beta)$ is finitely realizable in
$M$. 
\end{fact}

\begin{defn}
  For  an elementary  extension $\UU$ of $\CM$,  
$\beta\in \tY(\UU)$ and $\gamma\in \tZ(\UU)$ we say that $\beta$ and
$\gamma$ are \emph{$\varphi$-independent over $\CM$} if 
$\tp_\varphi(\beta/M\gamma)$ is finitely realizable in $M$ 
\end{defn}

The following is a consequence of the fundamental  theorem of local
stability.

\begin{fact}
  \label{fact:stable-sym} 
For types $p(y)\in S_\varphi(M)$, $q\in S_{\varphi^*}(M)$ the
following conditions are equivalent. 
\begin{enumerate}
\item  There are  realizations $\beta\models p(y)$,
and   $\gamma\models q(z)$  that are $\varphi$-independent over $M$
and such that $\models
  \varphi(\beta,\gamma)$. 
\item  For any   realizations $\beta\models p(y)$,
 and $\gamma\models q(z)$ that are $\varphi$-independent over $M$ we have  $\models
  \varphi(\beta,\gamma)$.
\item $d^\varphi_p(z)\in q(z)$ (where $d^\varphi_p(z)$ is a formula
  that defines the set $\{ c\in \tZ \colon \varphi(y,c) \in p(y)\}$). 
\item  $d^{\varphi^*}_q(y)\in p(y)$.
\end{enumerate}
\end{fact}

For types $p(y)\in S_\varphi(M)$ and $q\in S_{\varphi^*}(M)$
we write $\varphi(p,q)$ is one of equivalent conditions of Fact~\ref{fact:stable-sym}
holds.

\subsection{Cartesian relations and popular types}
\label{sec:cart-relat-popul}

The following definition is inspired by the paper \cite{zippel}

\begin{defn}
  We say that the formula $\varphi(y;z)$  is \emph{cartesian} if 
there are infinite   $B\subseteq \tY$ and  $C\subseteq
\tZ$  such that $B\times C \subseteq \varPhi$. 
\end{defn}

\begin{prop}\label{prop:cart-types} 
The relation  $\varphi$ is cartesian  if and only if  
$\models \varphi(p,q)$ for some non-algebraic types $p(y)\in S_\varphi(M)$ and $q(z)\in
S_{\varphi^*}(M)$.  
\end{prop}
\begin{proof}
Left to right. Let $B\subseteq \tY$, $C\subseteq \tZ$ be infinite sets
with $B\times C \subseteq\varPhi$.   By compactness,
there is a non-algebraic type $p(y)\in S_\varphi(M)$ with
$\varphi(y,c)\in P$ for all $c\in C$.  
Hence the set $d^\varphi_p(z)$ is infinite and we can take $q(z)$ to
be any non-algebraic type containing this formula. 

Right to left is  easy, since for $\varphi$-independent realizations
we have finite satisfiability in $\CM$.  
\end{proof}

The following definition is inspired by the paper  \cite{raz}.

\begin{defn}
A non-algebraic type $p(y)\in S_\varphi(M)$ is called \emph{popular}
if the  
  set  $\{c\in \tZ \colon \varphi(y;c) \in p(y)\}$ is infinite. 

Similarly, a non-algebraic type $q(z)\in S_{\varphi^*}(M)$ is called \emph{popular}
if the   set  $\{b\in \tY \colon \varphi(b;z) \in q(z)\}$ is infinite. 
\end{defn}

We have an easy equivalence. 

\begin{lem}\label{lem:pop=eq} 
  \begin{enumerate}
  \item A non-algebraic type $p(y)\in S_\varphi(M)$ is  popular if 
    and only if there is non-algebraic type  $q(z)\in
    S_{\varphi^*}(M)$ with $\models\varphi(p,q)$. 
\item A non-algebraic type $q(z)\in S_{\varphi^*}(M)$ is  popular if
    and only if there is non-algebraic type  $p(y)\in
    S_\varphi(M)$ with $\models\varphi(p,q)$. 
  \end{enumerate}
\end{lem}
\begin{proof}
   We will prove only $(1)$.  

Assume $p(y)$ is popular. Then the  definable set $d^\varphi_p(y)$ is
infinite and we can take $q$ to be any non-algebraic type containing
this set. 

Assume $\models\varphi(p,q)$ for some non-algebraic  $q(z)\in
    S_{\varphi^*}(M)$. Since $q(z)$ contains $d^\varphi_p(z)$, the  set
    defined by $d^\varphi_p(z)$ must be infinite. 
\end{proof}

Thus we have an equivalence.

\begin{prop}\label{prop:basic-eq}
The following conditions are equivalent  
\begin{enumerate}
\item  The relation $\varphi$ is cartesian. 
\item There is a popular type $p(y)\in S_\varphi(M)$ 
\item  There is a popular  type $q(z)\in S_{\varphi^*}(M)$. 
\end{enumerate}
\end{prop}

\section{The case of dimension $3$}
\label{sec:case-dimension-3}
In this section we assume in addition that $T$ is strongly minimal (In
particular it 
 eliminates $\exists^\infty$).

We  assume that $\tY$, $\tZ\subseteq M^k$ are $\CM$-definable sets both of Morley rank $2$  and
Morley degree $1$. 

We assume that $\RM(\varphi)=3$.  

Sometime we will use a big saturated elementary extension  $\UU$
of $\CM$.

\subsection{Artem's part}
\label{sec:artems-part}

We need to check some details, but I am pretty sure that part that
Artem proved is the following. It is an analogue and generalization
of Theorem 1.3 in \cite{zippel}

\begin{thm}\label{thm:zippel} 
Assume in addition that $\CM$ is definable in a distal structure. 
Then the following are equivalent.

\begin{enumerate}
\item $\varphi$ is not
cartesian.
\item $\varPhi$ is $K_{k,k,}$-free for some $k$. 
\item There is $\delta>0$ such that  for all 
finite $B\subseteq \tY, C\subseteq \tZ$ with $|B|=|C|=n$ we have 
\[  |\varPhi \cap (B\times C)| = O(n^{3/2-\delta}).  \]
\end{enumerate}
 \end{thm}

\subsection{Sergei's part}
\label{sec:sergeis-part}

\gdef\acl{\operatorname{acl}}

For a definable set $\tY_0 \subseteq \tY$ we say that 
$\tY_0$ is large in  $\tY$ if $\RM(\tY \setminus \tY_0) < 2$;
and the same for a subset $\tZ_0 \subseteq \tZ$.

\begin{thm} One of the following holds for the relation $\varphi$ .

  \begin{enumerate}
  \item  There   are large subsets $\tY_0 \subseteq \tY$ and $\tZ_0
    \subseteq \tZ$ such that the restriction of $\varphi$ to $\tY_0
    \times \tZ_0$ is not cartesian. 
  \item    There are  $\beta\in \tY(\UU)\setminus M$ and $\gamma\in
  \tZ(\UU)\setminus M$   such that $\models\varphi(\beta, \gamma)$, $\beta$
  and $\gamma$ are independent over $M$ and $\bigl(\acl^{eq}(\beta)
  \cap \acl^{eq}(\gamma)\bigr) \neq 
  \acl^{eq}(\emptyset)$.  
  \end{enumerate}
\end{thm}

\begin{rem} Before proving the theorem we remark  that it gives us Elekes--Szabo.

Recall that in Elekes--Szabo  case we have $\tY =Y^2$ and $\tZ=Z^2$ and 
our relation $\varphi(y_1,y_2; z_1, z_2)$  is 
\[ \varphi(y_1,y_2; z_1, z,2)= \exists x \Bigl(R(x ,y_1, z_1) \wedge
  (R(x ,y_2, z_2)\Bigr). \]

Assume we are in case $(2)$ and let $\beta=(\beta_1,\beta_2)$,
$\gamma=(\gamma_1,\gamma_2)$ be as in Theorem.

We choose $t\in \bigl(\acl^{eq}(\beta)
  \cap \acl^{eq}(\gamma)\bigr) \setminus 
  \acl^{eq}(\emptyset)$. 

Notice that both $\beta$
and $\gamma$ fork with $t$ over the empty set, hence
$\RM(\beta/\emptyset)=2$ and the same is true for $\gamma$.

Let $\alpha$ be such that
\[    \Bigl(R(\alpha,\beta_1, \gamma_1) \wedge (R(\alpha ,\beta_2,
  \gamma_2)\Bigr). \]
And we have the following group configuration. 

$
\begin{tikzpicture}[scale=0.9]
\draw[black,line  width=0.5mm] (0,0) -- (0,2); 
\draw[black,line  width=0.5mm] (0,0) -- (2,0); 
\draw[black,line  width=0.5mm] (1,0) -- (0,2); 
\draw[black,line  width=0.5mm] (0,1) -- (2,0); 
\node[below,scale=1.0] at (0,0) {$\alpha$}; 
\node[below,scale=1.0] at (1,0) {$\beta_1$}; 
\node[below,scale=1.0] at (2,0) {$\gamma_1$}; 
\node[left,scale=1.0] at (0,1) {$\gamma_2$}; 
\node[left,scale=1.0] at (0,2) {$\beta_2$}; 
\node at (0.9,0.9) {$t$}; 
\end{tikzpicture}
$

\end{rem}

\begin{proof}[Proof of Theorem]
We are going to use Morley Rank often, and as usual for $p(y)\in
S_\varphi(M)$  we denote $\RM(p(y))$ the Morley Rank of $p$ as
an incomplete type.

We assume not $(1)$ and will show that $(2)$ holds. 
Notice that in the proof we can freely replace $\tY$ and $\tZ$ by their
large subsets.

Let $p(y)\in S(M)$ be the generic type on $\tY$, it is a unique type
on $Y$ of Morley Rank $2$.  

Since $\varphi$ has Morley rank $3$, the set $\{ c\in \tZ \colon
\varphi(y,c)\in p \}$ has Morley Rank at most one. Thus we can
through away this set and assume that the Morley Rank of $\varphi(y,c)$
is at most one for all  $c\in \tZ$. 
Similarly we may assume that  the Morley Rank of $\varphi(b,z)$
is at most one for all  $b\in \tY$.

If there are only finitely many popular types, then we can through
away finitely many definable sets of Morley Rank one, and get the case
when there are no popular types.  Thus we can assume that there are
infinitely many popular types on $\tY$.

\gdef\CQ{\mathcal{Q}}
Let $\CP$ be the set of al popular types on $\tY$ and $\CQ$ be the set of al popular types on $\tY$

For $p \in S_\varphi(M)$ we denote by $[p]$ the canonical parameter of 
the $\varphi^*$-definable set  $\{ c\in \tZ \colon \varphi(y,c)\in p
\}$; and for  $q\in S_{\varphi^*}(M)$ we will denote by $[q]$ he
canonical parameter for $d^{\varphi^*}_q(y)$. 

Clearly both  maps  $p \mapsto [p]$ and $q\mapsto [q]$ are injective.  

We will need few lemmas. 

\begin{lem}
  \begin{enumerate}
  \item  The set $\{ [p] \colon p\in \CP \}$ is type definable. 
  \item The set $\{ [q] \colon q\in \CQ \}$ is type definable. 
  \end{enumerate}
  \end{lem}

\begin{lem} If $p\in \CP$ and $c\in d^\varphi_p$ then $[p]\in
  \acl(c)$.   
    
Similarly if $q\in \CQ$ and $b\in d^{\varphi^*}_p$ then $[q]\in
  \acl(b)$.   
\end{lem}
\begin{proof}
There are only finitely many $\varphi$-types of Morley rank one containing $\varphi(y,c)$
\end{proof}

\begin{lem}\label{lem:fin} For any $p\in \CP$ there only finitely many $q\in \CQ$
  with $\models\varphi(p,q)$, and vice versa.
\end{lem}
\begin{proof}  Let $\beta\in \UU$ realizes $p$. It is not hard to see
  that for $q\in \CQ$ if  $\models\varphi(p,q)$ then the Morley Rank
  of $\varphi(\beta,z)\cup q(z)$ is one.  Since  Morley Rank of
  $\varphi(\beta,z)$ is one, there only finitely many such $q$.  
  \end{proof}

Since the set $\CP$ is infinite,  we can find a popular type $p$ such that $[p] \notin \acl(\varnothing)$. 
Choose $q\in \CQ$ with $\models \varphi(p,q)$.

Choose $\beta\models p(y)$ and$\gamma\models q(z)$ independent over
$M$. 

We have $[p]\in \acl(\gamma)$, $[q]\in \acl(\beta)$ with $[p]$ and
$[q]$ are inter-algebraic over the empty set.  

\end{proof}

\fi

\if 0
\subsection{ Elekes-R\'{o}nyai Theorem.}
\label{sec:elek-rony-theor}

We will start with  Elekes-R\'{o}nyai Theorem.

\begin{thm}[Elekes-R\'{o}nyai,\cite{ER}] Let $f(x,y)\in \CC[x,y]$ be a polynomial that
  depends on both variables.  Then one of the following holds.
  \begin{enumerate}[(a)]  
  \item $f(x,y)$ is expanding, i.e. there is  $\delta> 0$
    such that for all $A,B\subseteq\CC$ with $|A|=|B|=n$ we have $|f(A,B)| =
    \Omega(n^{1+\delta})$. 
  \item  There are $g,h,k\in \CC[x]$ such
    that either $f(x,y)=g(h(x)+k(y))$ or $f(x,y)=g(h(x)\cdot k(y))$.
  \end{enumerate}
 \end{thm}

\begin{rem}
The above
statement is different from the original Elekes-R\'{o}nyai Theorem.
First of all originally it was proved for polynomials
 over the reals, and also part $(a)$ was weaker.  F.~de~Zeeuw (see
 \cite{deZ})  claims that the above statement can be obtained following the
 approach form \cite{raz}, but it would be nice to find a reference. 
\end{rem}

Notice that in Theorem~\ref{sec:elek-rony-theor} the clauses $(a)$ and $(b)$ are exclusive.
Indeed in the case (b), say if $f(x,y)=x+y$, then for any $n$
if we take $A_n=B_n=\{ 0,\dotsc,n-1\}$ then $f(A_n,B_n)=\{0,\dotsc,
2n-2\}$ that has size  $2n-1$. 

Also notice that in the case $(b)$ there is a positive $C$  such that for any $n$ we can find $A_n,B_n,C_n
\subseteq_n \CC$ with  the graph of $f$ passing through at
least $Cn^2$ points of $A_n\times B_n \times C_n$, i.e. the
intersection of the graph of $f$ with $n^3$-grid is quite large. 

\fi

\bibliography{refs}

\end{document}